\let\newbigast=\bigast
\renewcommand{\bigast}{\mathbin{\newbigast}}
\renewcommand{\p@enumii}{}
\renewcommand{\p@enumiii}{}
\def\@enum@{\list{\csname label\@enumctr\endcsname}%
           {\usecounter{\@enumctr}\def\makelabel##1{
\normalfont\ignorespaces\emph{{##1}~}}
\setlength{\labelsep}{3pt}
\setlength{\parsep}{0pt}
\setlength{\itemsep}{0pt}
\setlength{\leftmargin}{0pt}
\setlength{\labelwidth}{0pt}
\setlength{\listparindent}{\parindent}
\setlength{\itemsep}{0pt}
\setlength{\itemindent}{0pt}
\topsep=3pt plus 1pt minus 1 pt}}
\def\@map#1#2[#3]{\mbox{$#1 \colon\thinspace #2 \to #3$}}
\def\map#1#2{\@ifnextchar [{\@map{#1}{#2}}{\@map{#1}{#2}[#2]}}
\renewcommand{\epsilon}{\ensuremath{\varepsilon}}
\renewcommand{\phi}{\ensuremath{\varphi}}
\renewcommand{\to}{\ensuremath{\longrightarrow}}
\renewcommand{\mapsto}{\ensuremath{\longmapsto}}
\newcommand{\rp}{\ensuremath{\mathbb{R}P^2}}
\newcommand{\Z}{\ensuremath{\mathbb Z}}
\newcommand{\K}{\ensuremath{\mathbb K}}
\newcommand{\N}{\ensuremath{\mathbb N}}
\newcommand{\Q}{\ensuremath{\mathbb Q}}
\newcommand{\R}{\ensuremath{\mathbb R}}
\newcommand{\St}[1][2]{\ensuremath{\mathbb S}^{#1}}
\newcommand{\FF}{\ensuremath{\mathbb F}}
\renewcommand{\to}{\ensuremath{\longrightarrow}}
\newcommand{\calA}{\ensuremath{\mathcal{A}}}
\newcommand{\calH}{\ensuremath{\mathcal{H}}}
\newcommand{\calG}{\ensuremath{\mathcal{G}}}
\newcommand{\calF}{\ensuremath{\mathcal{F}}}
\newcommand{\wh}{\ensuremath{\operatorname{\text{Wh}}}}
\newcommand{\cokernil}{\ensuremath{\operatorname{\text{Coker}}}}
\newcommand{\kernil}{\ensuremath{\operatorname{\text{Ker}}}}
\newcommand{\nil}{\ensuremath{\operatorname{\text{Nil}}}}
\newcommand{\fvc}{\ensuremath{\mathcal{F}_{\text{vc}}}}
\newcommand{\fvcinf}{\ensuremath{\mathcal{F}_{\text{vc}^{\infty}}}}
\newcommand{\fin}{\ensuremath{\mathcal{F}_{\text{fin}}}}
\newcommand{\hooklongrightarrow}{\lhook\joinrel\longrightarrow}
\newcommand{\lhra}{\mathrel{\lhook\joinrel\to}}
\newcommand{\ang}[1]{\ensuremath{\left\langle #1\right\rangle}}
\newcommand{\im}[1]{\ensuremath{\operatorname{Im}(#1)}}
\renewcommand{\epsilon}{\varepsilon}
\newcommand{\dic}[1]{\ensuremath{\operatorname{\text{Dic}}_{#1}}}
\newcommand{\quat}[1][8]{\ensuremath{\mathcal{Q}_{#1}}}
\newcommand{\mcgclosed}[1]{\ensuremath{\operatorname{\text{Mod}}(#1)}}
\newcommand{\mcg}[2]{\ensuremath{\operatorname{\text{Mod}}(#1,#2)}}
\newcommand{\teich}{\operatorname{\text{Teich}}}
\newcommand{\tonestar}{\ensuremath{T^{\ast}}}
\newcommand{\oonestar}{\ensuremath{O^{\ast}}}
\newcommand{\brak}[1]{\ensuremath{\left\{ #1 \right\}}}
\newcommand{\evc}[1]{\ensuremath{\underline{\underline E} #1}}
\newcommand{\efin}[1]{\ensuremath{\underline E #1}}
\theoremstyle{plain}
\newtheorem{thm}{Theorem}
\newtheorem{prop}[thm]{Proposition}
\newtheorem{cor}[thm]{Corollary}
\newtheorem*{fjconjecture}{Farrell-Jones Isomorphism Conjecture (FJIC)}
\newtheorem{maintheorem}[thm]{Theorem}
\newtheoremstyle{newremark}
  {}
  {}
  {}
  {}
  {\bfseries}
  {.}
  {.5em}
  {}
\theoremstyle{newremark}
\newtheorem{rem}[thm]{Remark}
\newtheorem*{defn}{Definition}
\newcommand{\reth}[1]{Theorem~\protect\ref{th:#1}}
\newcommand{\repr}[1]{Proposition~\protect\ref{prop:#1}}
\newcommand{\resec}[1]{Section~\protect\ref{sec:#1}}
\newcommand{\reqref}[1]{(\protect\ref{eq:#1})}
\begin{document}

\title{Braid groups of the projective plane, mapping class groups of non-orientable surfaces and algebraic $K$-theory of their group rings}

\author{John~Guaschi\vspace*{1mm}\\ 
Normandie Univ., UNICAEN, CNRS, LMNO, 14000 Caen, France\\
e-mail:~\url{john.guaschi@unicaen.fr}\vspace*{4mm}\\
Daniel Juan-Pineda\vspace*{1mm}\\
Centro de Ciencias Matem\'aticas,\\
Universidad Nacional Aut\'onoma de M\'exico,
Campus Morelia,\\
Morelia, Michoac\'an, M\'exico 58089\\
e-mail:~\url{daniel@matmor.unam.mx}\vspace*{4mm}}

\date{28th February 2025}

\maketitle

\begingroup
\renewcommand{\thefootnote}{}
\footnotetext{\noindent 2020 AMS Subject Classification: 20F36, 19A31, 19B28, 19D35, 20F67, 20E45.}
\footnotetext{Keywords: surface braid group, braid groups of the projective plane, mapping class groups of the punctured projective plane, lower algebraic $K$-theory, conjugacy classes, virtually-cyclic subgroups, Farrell-Jones conjecture, Nil groups}
\endgroup 

\begin{abstract}
\noindent
We describe the lower algebraic $K$-theory of the integral group ring of both the pure and full braid groups of the real projective plane $\rp$ with $3$ strings,  as well as that of the integral group ring of the mapping class group of $\rp$ with $3$ marked points. In addition, we give a general formula for the algebraic $K$-theory groups of the group ring of the mapping class group of non-orientable surfaces  with $k$ marked points, where $k\geq3$.
\end{abstract}





\section{Introduction}

The braid groups $B_n$ were introduced by E.~Artin in~1925~\cite{A1} in a geometric and intuitive manner, and further studied in 1947 from a more rigorous and algebraic standpoint~\cite{A2,A3}. These groups may be considered as a geometric representation of the standard everyday notion of braiding strings or strands of hair. As well as being fascinating in their own right, braid groups play an important r\^ole in many branches of mathematics, for example in topology and dynamics on surfaces and many other areas, see~\cite{BiB,FM} for example. 

T.~Farrell and L.~Jones proposed their conjecture in~\cite{FJ}. 
It asserts that the understanding of the groups $K_{\ast}(\Z[G])$ should be determined by the universal space, $\evc G$, for actions with virtually cyclic isotropy and homological information, see Section~\ref{sec:FJC} for a precise statement. The Farrell-Jones isomorphism conjecture is valid for the braid groups of the projective plane with any number of strings~\cite{JM2}. In this paper, we prove that it is also valid for the mapping class groups of the projective plane $\rp$ relative to a finite number of marked points (Corollary~\ref{cor:ficmcg}). We are also able to determine the lower algebraic $K$-theory groups for both the pure and the full braid groups of the projective plane on three strings. 
In the second part of this paper, we describe a formula for the computation of the 
algebraic $K$-theory groups of the projective plane with at least $3$ strings. For this, we make use of the close relationship between the braid groups and the mapping class groups of $\rp$ with a finite number of punctures (see equation~\ref{eq:bmod}). A new feature is the description of the mapping class group of a non-orientable surface, such as the projective plane, as a subgroup of a mapping class group of a suitable orientable surface. 

The first main result of the paper is the computation for the lower algebraic $K$-theory groups for the group ring $\Z[PB_3(\rp)]$ of the pure braid group on three strands. 
This is the theorem of~\cite[p.1888]{JM2}, which we reprove in a slightly different manner.


\begin{maintheorem}\label{th:mainA}
For the pure braid group $PB_3(\rp)$ of the projective plane on three strings, we have  $\wh(PB_3(\rp))=0$, $\widetilde{K}_0(\Z[PB_3(\rp)])\cong \Z/2$, and $K_i(\Z[PB_3(\rp)])= 0$ for $i\leq -1$.
\end{maintheorem}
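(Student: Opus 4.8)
The plan is to exploit the fact, recalled in the introduction, that the Farrell--Jones isomorphism conjecture holds for the braid groups of $\rp$ (the content of~\cite{JM2}), so that for every $n$ one has an isomorphism $K_n(\Z[PB_3(\rp)]) \cong H_n^{PB_3(\rp)}(\evc{PB_3(\rp)}; \mathbb{K}_{\Z})$ with the $K$-theory spectrum of $\Z$. The first task is structural. Using the Fadell--Neuwirth fibration $\rp \setminus \{2 \text{ points}\} \hookrightarrow F_3(\rp) \to F_2(\rp)$, whose total and base spaces have vanishing $\pi_2$, one obtains a short exact sequence $1 \to \F[2] \to PB_3(\rp) \to PB_2(\rp) \to 1$ in which the fibre contributes the free group $\F[2]$ and the base is the finite quaternion group $PB_2(\rp) \cong Q_8$. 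Hence $PB_3(\rp)$ contains $\F[2]$ as a subgroup of finite index and is virtually free. I would then list the finite subgroups up to conjugacy (they inject into $Q_8$), and record a graph-of-groups (Bass--Serre) decomposition of $PB_3(\rp)$ with finite vertex and edge groups. The crucial structural point, forced by the target answer, is that a copy of $Q_8$ occurs as a maximal finite subgroup.

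Next I would pass from the family $\fvc$ of virtually cyclic subgroups to the family $\fin$ of finite subgroups. The relative assembly map $H_n^{G}(\efin{G}; \mathbb{K}_{\Z}) \to H_n^{G}(\evc{G}; \mathbb{K}_{\Z})$ has a cofibre built from the Nil groups of the infinite virtually cyclic subgroups of $PB_3(\rp)$; since the group is virtually free these subgroups are either finite-by-$\Z$ or amalgams of two finite groups over a common finite subgroup, so the relevant terms are Farrell--Jones twisted $NK$ groups and Waldhausen Nil groups attached to the small finite groups $1, \Z/2, \Z/4, Q_8$. I would show that all of these vanish in the degrees $n \leq 1$ that control $\wh$, $\widetilde{K}_0$ and the negative $K$-groups, reducing the claim to the known vanishing of the lower $NK$-groups of the integral group rings of these cyclic and quaternionic groups. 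This step is the main obstacle of the argument.

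It then remains to compute $H_n^{G}(\efin{G}; \mathbb{K}_{\Z})$, and here the virtually free structure pays off: one may take $\efin{PB_3(\rp)}$ to be the one-dimensional Bass--Serre tree $T$. The associated Atiyah--Hirzebruch / Bass--Serre spectral sequence has only two columns, so it degenerates into a Mayer--Vietoris long exact sequence relating $K_n(\Z[PB_3(\rp)])$ to $\bigoplus_v K_n(\Z[G_v])$ and $\bigoplus_e K_n(\Z[G_e])$ over the vertices and edges of $G\backslash T$. Feeding in the classical computations for the finite stabilisers --- $\wh(Q_8)=0$, $\widetilde{K}_0(\Z[Q_8]) \cong \Z/2$, and $K_i(\Z[Q_8]) = 0$ for $i \leq -1$, together with the vanishing of the corresponding lower $K$-groups of $\Z/2$ and $\Z/4$ --- yields the result: the class $\Z/2$ survives from the $Q_8$-vertex to give $\widetilde{K}_0(\Z[PB_3(\rp)]) \cong \Z/2$, the Whitehead group vanishes, and $K_i(\Z[PB_3(\rp)]) = 0$ for $i \leq -1$, the negative vanishing being immediate from $\dim T = 1$ together with $K_{\leq -1}(\Z[G_\sigma]) = 0$ for every finite stabiliser $G_\sigma$.

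The hard part is thus twofold: first, establishing the precise conjugacy classes of finite subgroups and the graph-of-groups decomposition --- in particular confirming that $Q_8$ embeds in $PB_3(\rp)$ even though the Fadell--Neuwirth sequence need not split over it --- and second, the vanishing of the Nil contributions in low degrees. Once these are secured, the remaining bookkeeping in the Mayer--Vietoris sequence is routine.
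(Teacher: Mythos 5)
Your overall strategy --- virtually free structure via Fadell--Neuwirth, a Bass--Serre decomposition with finite stabilisers, a Mayer--Vietoris/Waldhausen splitting for the contribution of the family of finite subgroups, and a separate analysis of the Nil terms --- is exactly the route the paper takes: it realises the $\quat$-action on a graph to obtain the explicit amalgam $PB_3(\rp)\cong \Z/4 \bigast_{\Z/2}\quat$ and then applies the splitting formula of Proposition~\ref{prop:formulaK}. However, there is a genuine gap in your treatment of the Nil terms. You propose to kill the cofibre of the relative assembly map by proving that the twisted $NK$-groups and Waldhausen Nil groups attached to the finite groups $1,\Z/2,\Z/4,\quat$ all vanish in degrees $n\le 1$. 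This is false for $\Z/4$: by Weibel's computation (the reference [Weib] cited in this very paper), $NK_0(\Z[\Z/4])$ and $NK_1(\Z[\Z/4])$ are countably infinite direct sums of copies of $\Z/2$ --- indeed these are precisely the source of the nontrivial $\nil_i$ terms in Theorem~\ref{th:mainB} for $B_3(\rp)$, where $\Z/4\times\Z$ genuinely occurs as a subgroup. So your reduction lands on a false statement and the step cannot be completed as written.

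What actually saves the computation is not the vanishing of these Nil groups but their absence: by the classification of Gon\c{c}alves--Guaschi, the infinite virtually cyclic subgroups of $PB_3(\rp)$ are, up to isomorphism, only $\Z$, $\Z/2\times\Z$ and $\Z/4\bigast_{\Z/2}\Z/4$; in particular $\Z/4\times\Z$, $\Z/4\rtimes\Z$ and $\quat\rtimes\Z$ do not occur. The only Nil groups that enter are then $NK_i(\Z[\Z/2])$, which vanish for $i\le 1$, and the Waldhausen Nil of $\Z/4\bigast_{\Z/2}\Z/4$, which by Lafont--Ortiz is isomorphic to the Farrell Nil of its index-two subgroup $\Z/2\times\Z$ and hence also vanishes. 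You would therefore need to import (or prove) this classification; without it the argument does not close. A second, smaller, gap: you never pin down the actual graph of groups, and the Mayer--Vietoris bookkeeping (in particular whether one or two copies of $\Z/2$ survive in $\widetilde K_0$, and what the edge groups cancel) depends on knowing the vertex and edge groups exactly; the paper's identification $\Z/4\bigast_{\Z/2}\quat$, obtained by making $\quat$ act on a subdivided wedge of two circles, is what makes the final step routine.
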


The case of $B_3(\rp)$ is more involved, with the appearance of Nil group factors in both the Whitehead and $\widetilde{K}_0$-groups. 

\begin{maintheorem}\label{th:mainB}
The lower algebraic $K$-theory of the group ring $\Z[B_3(\rp)]$ of $B_3(\rp)$ is as follows:
\begin{align*}
\wh(B_3(\rp))&\cong 
\Z \oplus\Z\oplus \nil_1,\\
\widetilde{K}_0(\Z[B_3(\rp)])&\cong (\Z/2)^4\oplus \nil_0,\\
K_{-1}(\Z[B_3(\rp)])&\cong  (\Z/2)^2\oplus \Z\oplus\Z\; \text{and}\\
K_{i}(\Z[B_3(\rp)])&=0 \;\text{for $i\leq -2$,}
\end{align*}
where for $i=0,1$, $\nil_i$ is isomorphic to a countably-infinite direct sum of copies of $\Z/2$.
\end{maintheorem}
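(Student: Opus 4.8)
The plan is to exploit the validity of the Farrell--Jones isomorphism conjecture for $G=B_3(\rp)$, established in~\cite{JM2}, which identifies each group $K_n(\Z[G])$ with the equivariant homology group $H_n^G(\evc{G};\K_{\Z})$ of the classifying space for the family $\fvc$ of virtually cyclic subgroups, with coefficients in the algebraic $K$-theory spectrum. The inclusion of families $\fin\subseteq\fvc$ induces a $G$-map $\efin{G}\to\evc{G}$, and the resulting cofibre sequence yields a long exact sequence
\begin{equation*}
\cdots \to H_n^G(\efin{G};\K_{\Z}) \to H_n^G(\evc{G};\K_{\Z}) \to H_n^G(\evc{G},\efin{G};\K_{\Z}) \to \cdots
\end{equation*}
that separates the computation into a \emph{finite part}, governed by the finite subgroups of $G$, and a \emph{relative Nil part}, governed by its infinite virtually cyclic subgroups. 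I would first determine $H_n^G(\efin{G};\K_{\Z})$, then the relative term, and finally show that the sequence splits in the relevant degrees.

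For the finite part, the crucial input is a complete list, up to conjugacy, of the maximal finite subgroups of $B_3(\rp)$ together with their normalisers; by the known subgroup structure of the braid groups of $\rp$ these are dicyclic (binary dihedral) and quaternionic groups. Building a convenient model for $\efin{G}$ from this poset, I would run the equivariant Atiyah--Hirzebruch spectral sequence, whose $E^2$-page assembles the groups $\widetilde K_0$, $\wh$ and $K_{-1}$ of the integral group rings of these finite subgroups. The torsion-free summands (the copies of $\Z$ in $\wh$ and $K_{-1}$) together with the $(\Z/2)$-summands in $\widetilde K_0$ and $K_{-1}$ should emerge at this stage, once the contributions of the various conjugacy classes are collated and the (few) possible differentials are checked to vanish for degree reasons.

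For the relative term I would classify the conjugacy classes of maximal infinite virtually cyclic subgroups of $G$, distinguishing those of type I (semidirect products $F\rtimes\Z$) from those of type II (amalgams). By the standard decomposition of $H_n^G(\evc{G},\efin{G};\K_{\Z})$ as a direct sum indexed by these classes, each type-I class contributes Bass--Nil groups of the form $\nkonealt[i]{\Z[F]}$ and each type-II class a Waldhausen Nil term. The assertion that $\nil_0$ and $\nil_1$ are each a countably infinite direct sum of copies of $\Z/2$ should then follow from the explicit computation of these Nil groups for the relevant finite subgroups $F$, which are known to carry exactly this two-torsion.

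The final step is to show that the long exact sequence degenerates into split short exact sequences, so that in each degree $n\in\{-1,0,1\}$ the group $K_n(\Z[G])$ is the direct sum of the finite part and the Nil part; this follows because the connecting maps out of the Nil summands vanish, the Nil groups injecting into $H_n^G(\evc{G};\K_{\Z})$. I expect the main obstacle to lie in the two intertwined classification problems -- pinning down all conjugacy classes of maximal finite and of maximal infinite virtually cyclic subgroups of $B_3(\rp)$, delicate because of the interaction between the normal subgroup $PB_3(\rp)$ and the quotient $S_3$ -- and in the explicit Nil computation, since it is precisely the quaternionic and dicyclic subgroups that force the nonvanishing $\nil_0$ and $\nil_1$, and one must show these yield exactly $\bigoplus_{\infty}\Z/2$ rather than some other two-torsion group. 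Finally, the vanishing $K_i(\Z[G])=0$ for $i\leq-2$ should drop out from the corresponding vanishing of $K_i(\Z[F])$ and of the negative Nil terms for the finite subgroups $F$.
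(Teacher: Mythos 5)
Your high-level strategy (Farrell--Jones for $B_3(\rp)$, splitting off the $\efin{}$-part from the $\evc{}$-part, and summing Nil contributions over conjugacy classes of maximal infinite virtually cyclic subgroups) is sound and is, in outline, what the paper does; for a virtually free group the equivariant homology of $\efin{G}$ (the Bass--Serre tree) collapses to a single Mayer--Vietoris sequence, which is exactly the formula of Proposition~\ref{prop:formulaK}. But two of your steps have genuine gaps, and they sit precisely where the content of the theorem lies. First, your description of the finite part is wrong: the maximal finite subgroups of $B_3(\rp)$ are not all dicyclic or quaternionic. The paper's key structural input (Theorem~\ref{th:amalgamB3}) is that $B_3(\rp)\cong \oonestar\bigast_{\dic{12}}\dic{24}$, so the binary octahedral group $\oonestar$ of order $48$ occurs and carries a large share of the answer ($(\Z/2)^2$ in $\widetilde K_0$, $\Z/2\oplus\Z$ in $K_{-1}$, $\Z$ in $\wh$). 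Second, the assertion that the relevant differentials ``vanish for degree reasons'' fails: the finite part is the cokernel of the single map $K_n(\Z[\dic{12}])\to K_n(\Z[\oonestar])\oplus K_n(\Z[\dic{24}])$, and on $K_{-1}$ this map is a nonzero homomorphism $\Z\to(\Z/2)^2\oplus\Z^3$ sending $1\mapsto(0,0,1,1,0)$. Computing it is the technical core of the proof: the paper uses Carter's theorem, the singular-character exact sequence of Theorem~\ref{thm:negK} applied to the $p$-singular conjugacy classes of $\dic{12}$ and $\dic{24}$, and a Mackey/hyperelementary-induction argument to show the kernel term vanishes. If one took the map to be zero one would obtain $K_{-1}\cong(\Z/2)^2\oplus\Z^3$ and a spurious extra summand in $\widetilde K_0$, contradicting the stated answer.

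For the Nil part, ``the relevant finite subgroups are known to carry exactly this two-torsion'' is not an argument. One needs the actual classification of Proposition~\ref{prop:vcyc}: the maximal infinite virtually cyclic subgroups of $B_3(\rp)$ are, up to isomorphism, $\Z/2\times\Z$ (whose Nils vanish) and $\quat\bigast_{\Z/4}\quat$; one must also \emph{exclude} candidates such as $\Z/4\rtimes\Z$ with the inversion action and $\Z/8\bigast_{\Z/4}\quat$, whose Nil groups would be different, and this exclusion uses finiteness of centralisers of specific order-$4$ elements. The Waldhausen Nil of $\quat\bigast_{\Z/4}\quat$ is then identified, via Lafont--Ortiz, with the Farrell Nil of its index-$2$ subgroup, which the classification shows to be $\Z/4\times\Z$, so the contribution is Weibel's $\operatorname{NK}_i(\Z[\Z/4])\cong\bigoplus_{\infty}\Z/2$ for $i=0,1$. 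Without these two classifications (finite subgroups via the amalgam, and infinite virtually cyclic subgroups with the exclusions) your outline cannot be completed to a proof of the stated isomorphisms.
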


In both cases, the results are obtained by applying the formula for the algebraic $K$-groups of an amalgam of finite groups~\cite{JLMP} to $P_3(\rp)$ and $B_3(\rp)$, which by \repr{amalgpb3} and \reth{amalgamB3} are isomorphic to the amalgamated product $\Z/4 \bigast_{\Z/2}\quat$ and $\oonestar\bigast_{\dic{12}}\dic{24}$ respectively, where $\quat$ is the quaternion group of order $8$, $\dic{24}$ is the dicyclic of order $24$, and $\oonestar$ is the binary octahedral group of order $48$. 

For the braid groups of the projective plane with more than three strings, it was more convenient for us to understand first the mapping class group of the projective plane with marked points. Let $S_{g,k}$ be a surface, that may be orientable or not, of genus $g$ and $k$ punctures and let $\mcg{S_g}{k}$ be its mapping class group~\cite{FM}, defined to be the group of isotopy classes of diffeomorphisms of $S$ that preserve orientation if $S$ is orientable, and of isotopy classes of all diffeomorphisms if $S$ is non-orientable. In a previous paper~\cite{GJ}, we described the algebraic $K$ theory groups for the group ring $\Z[\mcg{S_g}{k}]$ in the orientable case, and we described the 
algebraic $K$-theory groups of the group ring of the braid groups on the sphere with any number of strings. The aim of the second part of the paper is to describe the 
algebraic $K$-theory groups in the non-orientable case, and to apply it to analyse the algebraic $K$-theory of the group rings of the braid groups of the 
projective plane with any number of strings. 

Let $N$ be a non-orientable surface of genus $g$ with $k$ marked points. The techniques that we use to understand the algebraic $K$-theory of $\Z[\mcg{N}{k}]$ are similar to those of~\cite{GJ}, the key ingredients being:
\begin{enumerate}
\item the existence of an injective group homomorphism:
\begin{equation*}
\phi \colon\thinspace \mcg{N_g}{k}\to \mcg{S_{g-1}}{2k},
\end{equation*}
where $S_{g-1}$ is the orientable double cover of $N_g$.
\item the fact that the covering projection $\pi \colon\thinspace S_{g-1}\to N_g$ induces an injection of Teichm\"uller spaces:
\begin{equation*}
\pi^{\ast} \colon\thinspace \teich(N_g,k)\to \teich(S_{g-1},2k).
\end{equation*}
\item the fact that the injective homomorphism $\pi^{\ast}$ respects the corresponding actions of mapping class groups on their Teichm\"uller spaces.
\end{enumerate}

The main result of this paper is the following theorem for mapping class groups of non-orientable surfaces.

\begin{maintheorem}\label{th:mainC}
Let $\calH=\bigcup_{i=0}^{\ell} \calH_{i}$ be the family of subgroups defined in (\ref{eq:unionH}) and $3g+k-3\geq 2$ with $g,k\geq 0$. Let $N=N_{g,k}$ be a non-orientable surface with genus $g$ and $k$ punctures. Then for all $s\in\Z$, there is a splitting: 
\begin{align}
 K_{s}(\Z[\mcgclosed{N}]) & \cong  H^{\mcgclosed{N}}_{s}(\efin \mcgclosed{N}; K\Z^{-\infty}) \oplus 
\bigoplus_{H\in[\mathcal{H}_{0}]}H^{H}_{s}(\efin H\to \ast)\  \oplus\notag\\ 
  &\bigoplus_{\stackrel{H\in [\calH_{i}]}{ i=1,\ldots,\ell}}H^{N_{\mcgclosed{N}}(H)}_{s}(\efin N_{\mcgclosed{N}}\to \efin W_{\mcgclosed{N}}(H))\label{eq:KnmodS},
\end{align}
where $\ell=\frac{3}{2}(g-1)+k-2$ if $g$ is odd and $\ell=\frac{3}{2}g+k-3$ if $g$ is even.
\end{maintheorem}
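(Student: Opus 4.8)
The plan is to proceed in three stages: establish the Farrell--Jones isomorphism conjecture for $\mcgclosed{N}$, reduce the resulting virtually-cyclic equivariant homology to the finite-subgroup one by the Lück--Weiermann method, and finally classify and count the maximal infinite virtually-cyclic subgroups that contribute the correction summands.

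First I would verify the FJIC for $\mcgclosed{N}=\mcg{N_g}{k}$. The conjecture is known for mapping class groups of orientable surfaces, and by ingredient~(a) the homomorphism $\phi\colon\thinspace \mcg{N_g}{k}\to\mcg{S_{g-1}}{2k}$ is injective, so $\mcgclosed{N}$ is a subgroup of such a group. Since the FJIC passes to subgroups, it holds for $\mcgclosed{N}$, yielding the identification $K_{s}(\Z[\mcgclosed{N}])\cong H^{\mcgclosed{N}}_{s}(\evc{\mcgclosed{N}};K\Z^{-\infty})$ for all $s\in\Z$, the right-hand side being the equivariant homology evaluated on the classifying space for the family $\fvc$ of virtually-cyclic subgroups. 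Ingredients~(b) and~(c) are exactly what is needed to make $\phi$ compatible with the $\teich$-actions, so that the finite and virtually-cyclic subgroup data of $\mcgclosed{N}$ is governed by that of the orientable double cover.

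Next I would pass from $\fvc$ to the finite family $\fin$ by means of the Lück--Weiermann pushout relating $\evc{\mcgclosed{N}}$ and $\efin{\mcgclosed{N}}$. The associated Mayer--Vietoris sequence expresses $H^{\mcgclosed{N}}_{s}(\evc{\mcgclosed{N}};K\Z^{-\infty})$ in terms of the finite part $H^{\mcgclosed{N}}_{s}(\efin{\mcgclosed{N}};K\Z^{-\infty})$ together with one relative term for each conjugacy class of maximal infinite virtually-cyclic subgroup. For a subgroup $H$ whose Weyl group $W_{\mcgclosed{N}}(H)$ is trivial (equivalently, $H$ is self-normalising) the relative term collapses to $H^{H}_{s}(\efin{H}\to\ast)$, which accounts for the summands indexed by $\calH_{0}$; for the remaining subgroups it is the $N_{\mcgclosed{N}}(H)$-equivariant homology of the pair formed by the classifying space for finite subgroups of the normaliser and that of $W_{\mcgclosed{N}}(H)$, giving precisely the third summand of \reqref{KnmodS}.

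The key and most delicate step is the classification, up to conjugacy, of the maximal infinite virtually-cyclic subgroups of $\mcgclosed{N}$, together with their organisation into the strata $\calH_{0},\dots,\calH_{\ell}$. Pulling the Nielsen--Thurston classification back along $\phi$, such subgroups are controlled by the periodic and reducible mapping classes of the orientable cover, and the index $\ell$ should arise from counting the isotopy classes of essential multicurves, i.e. the maximal reduction systems supported on $N_{g,k}$, stratified by the type of the associated complementary subsurface; the parity dependence $\ell=\tfrac{3}{2}(g-1)+k-2$ for $g$ odd and $\ell=\tfrac{3}{2}g+k-3$ for $g$ even reflects the different combinatorics of cross-caps according to the parity of the genus, and the hypothesis $3g+k-3\geq 2$ ensures the surface is complex enough for this enumeration to be uniform. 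I expect this enumeration, and the verification that the Mayer--Vietoris connecting homomorphisms vanish so that the sequence splits as the asserted direct sum, to be the main obstacle; the splitting itself should follow as in the orientable computation of~\cite{GJ}, where the inclusion of the finite into the virtually-cyclic classifying space is shown to be split after induction.
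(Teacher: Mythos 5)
Your overall strategy coincides with the paper's: FJIC for $\mcg{N_g}{k}$ via the injection $\phi$ into $\mcg{S_{g-1}}{2k}$ and heredity of the conjecture (this is Corollary~\ref{cor:ficmcg}), then the L\"uck--Weiermann pushout with $\im{\pi^{\ast}}\subset\teich(S_{g-1},2k)$ serving as the model for $\efin{\mcg{N_g}{k}}$, and finally identification of the relative terms via normalisers of infinite cyclic subgroups stratified by canonical reduction systems. One remark on the splitting: it is not obtained by checking that Mayer--Vietoris connecting homomorphisms vanish (which would be awkward to do directly); the paper invokes Bartels' theorem~\cite{Bart} that the relative assembly map $H^{G}_{s}(\efin G)\to H^{G}_{s}(\evc G)$ is split injective, after which the complementary summand is read off from the pushout using the induction isomorphisms $H^{G}_{\ast}(G\times_{V}Y)\cong H^{V}_{\ast}(Y)$. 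Your appeal to the analogous splitting in~\cite{GJ} points at the same mechanism, so this is a matter of making the citation precise rather than a missing idea.

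There is, however, one step that fails as stated: your justification of the $\calH_{0}$-summands. The normaliser of a cyclic group generated by a pseudo-Anosov class is virtually cyclic (Proposition~\ref{prop:Nvc}, the non-orientable analogue of McCarthy's theorem), but it is in general \emph{not} equal to that cyclic group --- it may contain roots and commuting finite-order elements --- so ``$W_{\mcgclosed{N}}(H)$ trivial, equivalently $H$ self-normalising'' is false in general. What is actually used is that $W_{\mcgclosed{N}}(H)=N_{\mcgclosed{N}}(H)/H$ is \emph{finite}, so that $\efin W_{\mcgclosed{N}}(H)$ may be taken to be a point; the resulting summand is then the relative assembly term of the full virtually cyclic normaliser, not of $\Z$. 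This distinction matters for the $K$-theory, since the Nil contribution of a nontrivial virtually cyclic group can be nonzero even though that of $\Z$ is trivial. Two further points your outline leaves open and the paper settles by citation or construction: the value of $\ell$ is the maximal rank of an abelian subgroup of $\mcg{N_g}{k}$, computed by Kuno~\cite{Ku}, rather than an ad hoc enumeration of multicurves; and the replacement of the L\"uck--Weiermann isotropy groups $N_{G}[H]$ by honest normalisers $N_{G}(H)$ requires the uniqueness-of-roots property for pure subgroups of $\im{\phi}$, which is arranged in Proposition~\ref{prop:evcspace}.
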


This result and the close relation between the mapping class groups $\mcg{N_g}{n}$ and the braid groups on $n$ strands of the projective plane $B_n(\rp)$ give a similar decomposition for the algebraic $K$-theory groups of $\Z[B_n(\rp)]$ for $n\geq 3$.

The paper is organised as follows. In \resec{prelim}, we recall the Farrell-Jones isomorphism conjecture, some general facts about $K_{-1}$ of a group ring that will be used later in the paper, as well as the definitions of surface braid groups and a presentation of the braid groups of $\rp$. In \resec{KPB3RP2}, in \repr{amalgpb3} we show that $P_3(\rp)$ is isomorphic to $\Z/4 \bigast_{\Z/2}\quat$, which leads to the proof of \reth{mainA} using~\cite{JLMP}. In \resec{KB3RP2}, we show in \reth{amalgamB3} that $B_3(\rp)$ is isomorphic to $\oonestar\bigast_{\dic{12}}\dic{24}$, we determine the isomorphism classes of the infinite virtually-cyclic subgroups of $B_3(\rp)$ in \repr{vcyc}, and we prove \reth{mainB}. In \resec{mcgnonor}, we recall some facts about the  Teichmüller space for non-orientable surfaces, and in \resec{EspaceNg}, in \repr{evcspace}, we determine a model of an appropriate space to which we may apply the Farrell-Jones isomorphism conjecture for the mapping class groups of such surfaces. This enables us to prove \reth{mainC}. Finally, in \repr{krp3} we determine the lower algebraic $K$-theory of the group ring of $\mcg{\rp}{3}$, and we make some concluding remarks about the computations for larger values of $n$ in the case of $\rp$.


\subsection*{Acknowledgements}

Both authors were partially supported by the French-Mexican International Laboratory \emph{Solomon Lefschetz} `LaSol' and the grant IN100122-PAPIIT-UNAM. The first author was partially supported by a CIC-UNAM grant, 
and the second author was partially supported by a `Poste Rouge' from the CNRS (France) and a sabbatical scholarship from DGAPA-UNAM.





\section{Preliminaries}\label{sec:prelim}

\subsection{The Farrell-Jones conjecture}\label{sec:FJC}

Let $G$ be a group. A collection $\calF$ of subgroups of $G$ is called a \emph{family} if it is closed under conjugation by elements of $G$ and by taking subgroups. A $G$-CW-complex $X$ is said to be a \emph{model} for the universal $G$-space $E_{\calF}G$ for actions with isotropy in $\calF$ if every isotropy group of the action on $X$ belongs to $\calF$ and the set $X^H$ of $H$-fixed points is contractible for all $H\in\calF$. Such a model always exists and is unique up to $G$-homotopy equivalence~\cite{Lu}. In this paper, we are concerned with the families $\calF_{fin}G$ of \emph{finite} subgroups and  $\calF_{vc}G$ of \emph{virtually-cyclic} subgroups of $G$. Let $\efin{G}$ and $\evc G$ be  models for the universal $G$-space for actions with isotropy in $\calF_{fin}$ and $\calF_{vc}$, respectively.

Let $H^G_{\ast}(-;\K_R)$ denote the equivariant $G$-homology theory with coefficients in the non-connective $K$-theory spectrum $\K_R$ of the ring $R$, see~\cite{DL} for details.  T.~Farrell and L.~Jones proposed their fundamental conjecture in~\cite{FJ0}. We recall a version of this conjecture that is given in~\cite{DL}.

\begin{fjconjecture}
Let $G$ be a discrete group, and let $R$ be a ring. Then for all $n\in\Z$, the assembly map, denoted $\calA_{vc}$, induced by the projection $\evc G\to \ast$, is an isomorphism:
\begin{equation*}
\calA_{vc}\colon\thinspace H^G_n(\evc G;\K_R)\to H^G_n(\ast;\K_r)\cong K_n(R[G]).
\end{equation*}
\end{fjconjecture}
It was proved in~\cite{JM2} that the Farrell-Jones conjecture is valid for braid groups of the projective plane. In this paper, we show that the conjecture is also valid for the mapping class group of the projective plane with a finite number of punctures.

Given a group $G$ and a ring $R$ with unity, let $i\colon\thinspace R\to R[G]$ be the homomorphism induced by sending $1$ to $1$. This induces a homomorphism $i_{\ast} \colon\thinspace K_0(R)\to K_0(R[G])$. The reduced $K_0$-group of $R[G]$, denoted by $\widetilde{K}_0(R[G])$, is defined as the cokernel of $i_{\ast}$. Note that when $R$ is a principal ideal domain, we have $K_0(R[G])\cong \Z\oplus \widetilde{K}_0(R[G])$.

Given a group $G$, the Whitehead group, $\wh(G)$, is defined as $K_1(\Z[G])/\pm G$, where $G\lhra K_1(\Z[G])$ is the natural inclusion.

In our computational results, we will use $\widetilde{K}_0(\Z[G])$ instead of $K_0(\Z[G])$ and $\wh(G)$ instead of $K_1(\Z[G])$. This is possible as the Farrell-Jones isomorphism conjecture is also valid for the Whitehead spectrum $\mathbb{W}(\Z;G)$, which satisfies $\pi_0\mathbb{W}(\Z;G)=\widetilde{K}_{0}(\Z[G])$, $\pi_1\mathbb{W}(\Z;G)=\wh(G)$ and coincides with $K_{-i}(\Z[G])$ for $i>0$, see~\cite[Section~5]{L} for details.

\subsection{Some facts about $K_{-1}$}\label{sec:K_1}

Let $G$ be a finite group, and let $p$ a prime number. Let $\Z_{p}$ and $\Q_{p}$ denote the $p$-adic integers and $p$-adic rationals respectively, and let $\FF_p$ denote the field with $p$ elements, which is of characteristic $p$. Given a field $F$, let $r_F$ denote the number of isomorphism classes of irreducible $F$-representations of $G$. In~\cite{C}, D.~Carter proved that:
\begin{equation*}
K_{-1}(\Z[G])\cong \Z^r\oplus (\Z/2)^s,  
\end{equation*}
where: 
\begin{equation*}
r=1-r_{\Q}+\sum_{p\mid\, |G|}(r_{\Q_p}-r_{\FF_p}),
\end{equation*}
and $s$ is equal to the number of irreducible $\Q$-representations $Q$ of $G$ with even Schur index $m(Q)$ but odd local Schur index $m_p(Q)$ at every prime $p$ dividing the order of $G$.

Let $\operatorname{\text{Conj}}(G)$ denote the set of conjugacy classes of elements of $G$, and let $k$ be a field of characteristic $0$. Given a $k$-representation $P$ of $G$, the \emph{character map} of $Q$ defines a function $\chi_P \colon\thinspace \operatorname{\text{Conj}}(G)\to k$. This gives rise to an injective homomorphism $K_0(k[G])\hookrightarrow \operatorname{\text{Cl}}(G:k)$, where $\operatorname{\text{Cl}}(G:k)$ is the $k$-vector space of class functions on $G$ with values in $k$. The image of this homomorphism is called the group of $k$-valued \emph{virtual characters} of $G$, see~\cite[Section 6]{L}.

Given a prime number $p$, let $\operatorname{\text{Conj}}_p(G)$ denote those conjugacy classes of elements of $G$, known as as $p$-\emph{singular} classes, whose order is divisible by $p$. Let $\operatorname{\text{SC}}_p(G)$ be the group of class functions generated by:
\begin{equation*}
\{f \colon\thinspace \operatorname{\text{Conj}}_p(G) \to \Q(\xi_n)) \, | \, \text{$f$ is a virtual character} \},
\end{equation*}
where $n$  is the order of $G$, and $\Q(\xi_n)$ is the corresponding cyclotomic extension of $\Q$. We define the group $\operatorname{\text{SC}}(G)$ of \emph{singular characters} of $G$ by: 
\begin{equation*}
\operatorname{\text{SC}}(G)=\bigoplus_{p\mid n}\operatorname{\text{SC}}_p(G).
\end{equation*}
By~\cite[Remark~6.13]{L}, $\operatorname{\text{SC}}(G)$ is finitely generated and free Abelian, and its rank is given by: 
\begin{equation}\label{eqn:rankSC}
\operatorname{\text{rank}}(\operatorname{\text{SC}}(G))=\sum_{p\mid\, |G|} \ (r_{\Q_p}-r_{\FF_p}).
\end{equation}

The following result relates $\widetilde{K}_0(\Q[G]), \operatorname{\text{SC}}(G)$ and $K_{-1}(\Z[G])$.
\begin{thm}\cite[Lemma~6.16]{L}\label{thm:negK}
Let $G$ be a finite group. There is a natural short exact sequence:
\begin{equation*}
0\to \widetilde{K}_0(\Q[G])\to \operatorname{\text{SC}}(G)\to K_{-1}(\Z[G])\to 0,
\end{equation*}
where the homomorphism $\widetilde{K}_0(\Q[G])\to \operatorname{\text{SC}}(G)$ sends a rational representation $I$ to the corresponding singular character $\chi_I$.
\end{thm}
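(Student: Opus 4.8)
The plan is to prove left-exactness and right-exactness separately: the injectivity of the singular-character map can be handled by elementary representation theory, while the identification of the cokernel with $K_{-1}(\Z[G])$ is the genuine content and rests on an arithmetic Mayer--Vietoris argument together with Carter's computation.

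First I would verify that $\chi$ is well defined and injective. Since $\Q[G]$ is semisimple by Maschke's theorem, $K_0(\Q[G])$ is free abelian of rank $r_{\Q}$ on the classes of the simple modules, and $\widetilde{K}_0(\Q[G])$ is the quotient by the class $[\Q[G]]$ of the regular representation, which is a primitive vector because the trivial representation occurs with multiplicity one. The character of the regular representation vanishes on every non-identity element, hence on every $p$-singular class for each prime $p\mid |G|$; thus the image of $[\Q[G]]$ in $\operatorname{SC}(G)$ is zero and $\chi$ descends to $\widetilde{K}_0(\Q[G])$. For injectivity, observe that any $g\neq 1$ has order divisible by some prime $p\mid |G|$, so it is $p$-singular for that $p$; consequently a virtual $\Q$-character lying in the kernel of $\chi$ vanishes on all of $G\setminus\{1\}$, and such a class function is an integral multiple of the regular character. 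Its class in $\widetilde{K}_0(\Q[G])$ is therefore trivial, which gives the desired injectivity.

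The core of the argument is the identification of $\coker{\chi}$ with $K_{-1}(\Z[G])$. For this I would invoke the localization--completion Mayer--Vietoris sequence comparing $\Z[G]$ with its rationalisation $\Q[G]$ and its $p$-adic completions $\Z_p[G]$, in the form used by Carter, namely
\[
K_0(\Z[G])\to K_0(\Q[G])\oplus\bigoplus_{p} K_0(\Z_p[G])\to \bigoplus_{p} K_0(\Q_p[G])\to K_{-1}(\Z[G])\to 0,
\]
whose right-exactness rests on the vanishing of $K_{-1}$ for the semisimple algebra $\Q[G]$ and for each $p$-adic order $\Z_p[G]$. Only the primes $p\mid |G|$ contribute, since $\Z_p[G]$ is a maximal order for $p\nmid |G|$. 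The remaining task is to translate this cokernel into characters: using the character map $K_0(\Q[G])\hookrightarrow\operatorname{Cl}(G;\Q)$ and the analogous injections for $\Q_p[G]$, together with the isomorphism $K_0(\Z_p[G])\cong K_0(\FF_p[G])$ and the Brauer decomposition map, one checks that the local term $K_0(\Q_p[G])/\operatorname{im}K_0(\Z_p[G])$ matches $\operatorname{SC}_p(G)$ modulo the restriction of global characters, so that the total cokernel becomes $\operatorname{SC}(G)/\chi(\widetilde{K}_0(\Q[G]))$.

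I expect the main obstacle to be precisely this last translation. Matching the $K$-theoretic cokernel with the singular-character lattice requires the rank identity $\operatorname{rank}\operatorname{SC}_p(G)=r_{\Q_p}-r_{\FF_p}$ of \eqref{eqn:rankSC} together with a careful comparison of the decomposition maps $K_0(\Z_p[G])\to K_0(\Q_p[G])$ with restriction of characters to $p$-singular classes. A rank count is reassuring but not decisive: since $\operatorname{rank}\widetilde{K}_0(\Q[G])=r_{\Q}-1$ and $\operatorname{rank}\operatorname{SC}(G)=\sum_{p\mid |G|}(r_{\Q_p}-r_{\FF_p})$, the cokernel has free rank $1-r_{\Q}+\sum_{p\mid |G|}(r_{\Q_p}-r_{\FF_p})$, in agreement with Carter's value of $r$. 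The real difficulty is the torsion: the summand $(\Z/2)^s$ predicted by Carter's formula must emerge as the torsion of $\coker{\chi}$, and pinning this down amounts to tracking the Schur-index contributions (even global but odd local Schur index) through the cokernels of the local maps, for which I would lean on the Cartan--Brauer triangle and the known local structure of $K_{-1}$.
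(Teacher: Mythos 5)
You should first be aware that the paper does not prove this statement: it is imported wholesale from \cite[Lemma~6.16]{L}, which in turn repackages Carter's computation, so there is no in-paper argument to compare yours against. Your route is the same as that of the cited source. The first half of your argument is complete and correct: the character map kills the class of the regular representation because the regular character vanishes on every $p$-singular class, and a virtual rational character vanishing on all $p$-singular classes for every $p\mid |G|$ vanishes off the identity, hence is an \emph{integral} multiple of the regular character (integrality because $\chi(1)/|G|=\langle\chi,1_G\rangle\in\Z$), hence trivial in $\widetilde{K}_0(\Q[G])$. Likewise the arithmetic-square sequence you write down, with right-exactness coming from $K_{-1}(\Q[G])=0$ and $K_{-1}(\Z_p[G])=0$, is exactly Carter's starting point.

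The one step you leave as a black box is the crux: the identification of $\operatorname{coker}\bigl(K_0(\Z_p[G])\to K_0(\Q_p[G])\bigr)$ with $\operatorname{SC}_p(G)$. What is needed is the precise statement from the $cde$-triangle: the map $e_p\colon K_0(\Z_p[G])\cong K_0(\FF_p[G])\to K_0(\Q_p[G])$ is (split) injective and its image consists exactly of the virtual characters vanishing on the $p$-singular classes. Over a sufficiently large $p$-adic field this is Serre's theorem on the image of $e$; one must then descend to $\Q_p$ itself (both the image of $e_p$ and the vanishing condition are Galois-stable, so this is routine, but it has to be said, and it is the point where the $\Q(\xi_n)$-valued definition of $\operatorname{SC}_p(G)$ in the paper gets reconciled with $K_0(\Q_p[G])$). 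Once that is in place, the theorem follows by modding out the local images first and the diagonal image of $K_0(\Q[G])$ second, and nothing further is required. In particular your closing worry about recovering Carter's $(\Z/2)^s$ inverts the logic: the torsion of $\operatorname{SC}(G)/\widetilde{K}_0(\Q[G])$ is whatever the exact sequence produces, and Carter's Schur-index formula is a downstream \emph{consequence} of this sequence, not a constraint your proof must verify.
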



\subsection{Basic definitions of surface braid groups}
Let $M$ be a surface of finite type, orientable or not, with or without boundary, and with a finite number (possibly zero) of punctures. Let $F_n(M)$ be the \emph{$n$th configuration space} of $F$ defined by:
\begin{equation*}
F_n(M)=\{ (x_1,\ldots ,x_n)\in M^n\, | \, \text{$x_i\neq x_j$ for $i,j\in\{1,\ldots ,n\}$, $i\neq j$}\}.
\end{equation*}
We equip $F_n(M)$ with the subspace topology inherited by the product topology on $M^n$. It is straightforward to see that $F_n(M)$ is a $2n$-dimensional open manifold. The symmetric group $S_n$ acts freely on $F_n(M)$ by permuting coordinates, and we denote the orbit space of $F_n(M)$ by this action by $D_n(M)$.  The \emph{pure braid group} (resp.\ \emph{full braid group}) on $n$ strands of $M$, denoted by $PB_n(M)$ (resp.\ by $B_n(M)$), is defined to be the fundamental group $\pi_1(F_n(M))$ (resp.\ $\pi_1(D_n(M))$)
The canonical projection $p\colon\thinspace F_n(M)\to D_n(M)$ is a regular $n!$-covering map, and hence we have the following short exact sequence:
\begin{equation*}
1\to PB_n(M)\to B_n(M)\to S_n\to 1.
\end{equation*}
See~\cite{GJ0} for a survey on surface braid groups. In this paper, we shall study the braid groups of the real projective plane $\rp$. We recall Van Buskirk's presentation of $B_n(\rp)$ that we shall use in what follows.

\begin{prop}[Van Buskirk~\cite{vB}]\label{prop:vBpresent}
The following constitutes a presentation of the group $B_n(\rp)$:\\
\underline{\textbf{generators:}}
$\sigma_{1},\ldots,\sigma_{n-1},\rho_{1},\ldots,\rho_{n}$.\\
\underline{\textbf{relations:}}
\begin{align}
\sigma_{i}\sigma_{j} &=\sigma_{j}\sigma_{i}\quad\text{if $\lvert i-j\rvert\geq 2$} \label{eq:artin1}\\
\sigma_{i}\sigma_{i+1}\sigma_{i}&=\sigma_{i+1}\sigma_{i}\sigma_{i+1} \quad\text{for $1\leq i\leq n-2$} \label{eq:artin2}\\
\sigma_{i}\rho_{j}&=\rho_{j}\sigma_{i}\quad\text{for $j\neq i,i+1$}\label{eq:sirj}\\
\rho_{i+1}&=\sigma^{-1}_{i}\rho_{i}\sigma^{-1}_{i} \quad\text{for $1\leq i\leq n-1$}\label{eq:sirisi}\\
\rho_{i+1}^{-1} \rho_{i}^{-1}\rho_{i+1}\rho_{i}&= \sigma_{i}^2 \quad\text{for $1\leq i\leq n-1$}\label{eq:rhocomm}\\
\rho_1^2 &=\sigma_{1}\sigma_{2}\cdots\sigma_{n-2}\sigma_{n-1}^2 \sigma_{n-2}\cdots\sigma_{2}\sigma_{1}.\label{eq:surfacerp2}
\end{align}
\end{prop}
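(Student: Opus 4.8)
The plan is to establish the presentation in two steps: a \emph{soundness} step, verifying that the six families of relations actually hold among geometric braids, and a \emph{completeness} step, showing by induction on $n$ that they suffice. For the geometric model I would present $\rp$ as the disk $\dt$ with antipodal boundary points identified, place the $n$ base points in the interior of $\dt$, take $\sigma_i$ to be the usual Artin half-twist exchanging the strands in positions $i$ and $i+1$ inside $\dt$, and take $\rho_j$ to be the braid in which the $j$th strand alone traverses the non-orientable generator of $\pi_1(\rp)\cong\Z/2$ (crossing the identified boundary) and returns. With this model, relations \eqref{eq:artin1}--\eqref{eq:artin2} are the braid relations inherited from the embedded disk, \eqref{eq:sirj} records that a half-twist and a cross-cap loop on disjoint strands commute, \eqref{eq:sirisi} and \eqref{eq:rhocomm} are obtained by sliding a half-twist across the cross-cap loops (so that $\rho_{i+1}$ is conjugate to $\rho_i$ and the commutator of two cross-cap loops is a full twist), and the anomalous relation \eqref{eq:surfacerp2} expresses that pushing one strand twice around the cross-cap is isotopic to the displayed band of half-twists; this identity is precisely where the nontrivial $\pi_2(\rp)$ enters.

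For completeness I would induct on $n$ using the Fadell--Neuwirth fibration whose fibre is $\rp$ deleted of $n-1$ points,
\[
\rp\smallsetminus\{n-1\text{ points}\}\lhra F_n(\rp)\to F_{n-1}(\rp),
\]
the fibre being homotopy equivalent to a wedge of $n-1$ circles for $n\geq 2$. A short computation along the tower shows that $\pi_2(F_m(\rp))=0$ for $m\geq 2$, so that for $n\geq 3$ the homotopy exact sequence collapses to a genuine short exact sequence
\[
1\to \pi_1(\rp\smallsetminus\{n-1\text{ points}\})\to PB_n(\rp)\to PB_{n-1}(\rp)\to 1
\]
with free kernel. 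From this one reads off a presentation of $PB_n(\rp)$ from a presentation of $PB_{n-1}(\rp)$ by the standard extension procedure, and then passes to the full braid group via $1\to PB_n(\rp)\to B_n(\rp)\to S_n\to 1$, whose quotient is generated by the images of the $\sigma_i$. Using \eqref{eq:sirisi} to eliminate every $\rho_j$ in favour of $\rho_1$ and the $\sigma_i$ then collapses the data to the six families listed, provided the bookkeeping confirms that no relation beyond \eqref{eq:sirj}--\eqref{eq:rhocomm} is created at each step.

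The main obstacle is the bottom of the tower, where $\pi_2(\rp)\cong\Z$ breaks the naive exactness. For the fibration $F_2(\rp)\to F_1(\rp)=\rp$ the boundary homomorphism $\partial\colon\pi_2(\rp)\to\pi_1(\rp\smallsetminus\{\text{pt}\})\cong\Z$ is nonzero (comparison with $PB_2(\rp)\cong\quat$ forces $\partial$ to be multiplication by $4$), and it is exactly the image of $\partial$, propagated up the tower by the conjugation relations \eqref{eq:sirisi}, that yields the single surface relator \eqref{eq:surfacerp2}; at the very bottom, $n=1$, the relation degenerates to $\rho_1^2=1$, recovering $B_1(\rp)=\pi_1(\rp)\cong\Z/2$. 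The delicate part is therefore not the Artin-type relations but showing that this one torsion-producing relator exhausts the discrepancy between the free-extension presentation and the actual group, so that \eqref{eq:surfacerp2} is the only extra relation needed. Controlling this $\pi_2$-phenomenon is the crux, and it is exactly what endows $B_n(\rp)$ with the $2$- and $4$-torsion that drives the $K$-theoretic computations in the rest of the paper.
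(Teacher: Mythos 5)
The paper offers no proof of this proposition: it is quoted verbatim from Van Buskirk~\cite{vB}, so there is no in-paper argument to compare against. Your outline is, in substance, Van Buskirk's original strategy (and the one used in later treatments such as~\cite{GG3}): verify the relations geometrically, then establish completeness by induction along the Fadell--Neuwirth fibrations, passing from $PB_n(\rp)$ to $B_n(\rp)$ via the symmetric-group extension. The topological inputs you cite are all correct --- the fibre of $F_n(\rp)\to F_{n-1}(\rp)$ is a wedge of $n-1$ circles, $\pi_2(F_m(\rp))=0$ for $m\geq 2$, the sequence is short exact for $n\geq 3$, and the failure at the bottom of the tower caused by $\pi_2(\rp)\cong\Z$ is precisely where the surface relation \eqref{eq:surfacerp2} originates. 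So the approach is sound and matches the cited source.

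Three caveats on the execution. First, the completeness step is where essentially all of the work lies, and your sketch defers it as ``bookkeeping'': writing down a presentation of the extension $1\to F_{n-1}\to PB_n(\rp)\to PB_{n-1}(\rp)\to 1$, then of $B_n(\rp)$ over $S_n$, and showing that the resulting relator set Tietze-reduces to exactly the six families \eqref{eq:artin1}--\eqref{eq:surfacerp2} (no more, no fewer) is the bulk of Van Buskirk's paper and cannot be omitted from a genuine proof. Second, pinning down the boundary map $\partial\colon\pi_2(\rp)\to\Z$ ``by comparison with $PB_2(\rp)\cong\quat$'' is circular in this context: the isomorphism $PB_2(\rp)\cong\quat$ is equivalent to the $n=2$ case of the presentation, so the base of the induction requires an independent direct computation (which is how Van Buskirk handles $n\leq 2$). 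Third, a small imprecision: $\rho_{i+1}=\sigma_i^{-1}\rho_i\sigma_i^{-1}$ is not a conjugate of $\rho_i$ (both exponents are $-1$), so the parenthetical gloss in your soundness step should be reworded; the relation records the effect of sliding the cross-cap loop past a half-twist, which twists rather than conjugates.
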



\section{The lower $K$-groups of $\Z[PB_3(\rp)]$}\label{sec:KPB3RP2}

It is well known that $PB_1(\rp) \cong \Z/2$, $PB_2(\rp) \cong \quat$, where $\quat$ denotes the quaternion group of order $8$, $PB_3(\rp) \cong F_2\rtimes \quat$, where $F_2$ denotes the free group of rank $2$, and that $PB_n(\rp)$ is infinite for all $n\geq 3$~\cite{vB}. The lower $K$-groups of the group rings of $PB_n(\rp)$ are known for $n=1,2$. For $n=3$, $PB_3(\rp)$ is virtually free, and so we may make use of the techniques given in~\cite{JLMP} for such groups to determine the corresponding lower $K$-groups. We first recall the semi-direct product structure of $PB_3(\rp)$ in more detail. Let $x,y$ be generators of $F_2$, and let $a,b$ be generators of $\quat$ subject to the relations $a^2=b^2$ and $bab^{-1}=a^{-1}$. The action of $a$ and $b$ on $x$ and $y$ is given by:
\begin{equation}\label{eq:actionQ8}
\text{$a(x)=y$, $a(y)=x$, $b(x)=x^{-1}$ and $b(y)=y^{-1}$.}
\end{equation}
One may verify that this defines an action of $\quat$ on $F_2$ and that this action defines a semi-direct product isomorphic to $PB_3(\rp)$.
In terms of the generators of $B_3(\rp)$ of~\repr{vBpresent}, by~\cite[pp.~765--766]{GG3}, $PB_3(\rp)$ is generated by $\rho_1,\rho_2,B_{2,3}$, $\rho_3$, where $B_{2,3}=\sigma_2^2$, $B_{2,3}$ and $\rho_3$ generate a free normal subgroup of $PB_3(\rp)$ of rank $2$, and $\rho_1,\rho_2$ are coset representatives of the quotient of $PB_3(\rp)$ by this subgroup, this quotient being identified with $P_2(\rp)$ that is isomorphic to $\quat$. Moreover, by setting $x=\rho_3$ and $y=\rho^{-1}B_{23}$, we see that $P_3(\rp)$ is isomorphic to the semi-direct product $F_2\rtimes \quat$, where the action is given by~(\ref{eq:actionQ8}).

In order to determine the algebraic $K$-theory groups of $\Z[PB_3(\rp)]$, we apply the techniques of~\cite[Section 2.6]{JLMP}, and for this, we regard $F_2$ as the fundamental group of the space $\Gamma$ illustrated in Figure~\ref{fig:Gamma} that is a join of $2$ circles with a common point $o$. The action of $\quat$ on $F_2$ may be realised geometrically by an action on $\Gamma$ as follows. Identifying one of the (oriented) circles with $x$ and the other with $y$, the generators $a,b$ of $\quat$ act on $F_2$ via~(\ref{eq:actionQ8}). Observe that the action of $b$ inverts the orientations of each circle $x$ and $y$. To avoid this, we add two points $u,v$ on each circle so that the action does not invert orientation of the edges. In this way, we obtain a complex, shown in Figure~\ref{fig:Gamma}, whose vertices are $v,o$ and $u$, and whose edges are $x=(o,v), x'=(v,0),y=(o,u)$ and $y'=(u,o)$. 
\begin{figure}[!h]
\centering
\begin{tikzpicture}
\draw[->,>=stealth',semithick] (2,0) arc (0:90:1) node[anchor=north] {$y'$};
\draw (1,1) arc (90:180:1);
\draw[->,>=stealth',semithick] (0,0) arc (180:270:1) node[anchor=south] {$x$};
\draw (1,-1) arc (-90:0:1);
\draw[->,>=stealth',semithick] (0,0) arc (0:90:1) node[anchor=north] {$y$};
\draw (-1,1) arc (90:180:1);
\draw[->,>=stealth',semithick] (-2,0) arc (180:270:1) node[anchor=south] {$x'$};
\draw (-1,-1) arc (-90:0:1);
\filldraw[black](0,0) circle (2pt) node[anchor=west] {$o$};
\filldraw[red](2,0) circle (2pt) node[anchor=west] {$u$};
\filldraw[red](-2,0) circle (2pt) node[anchor=east] {$v$};
\end{tikzpicture}
\caption{The space $\Gamma$}
\label{fig:Gamma}
\end{figure}
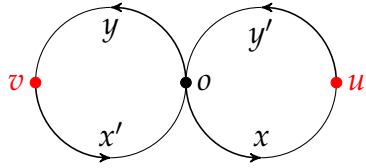
This action of $\quat$ yields the following stabilisers: the vertex $o$ has stabiliser $\quat$, the vertices $u,v$ each have stabiliser isomorphic to $\Z/4$, and the stabilisers of the edges are isomorphic to $\Z/2$. It follows that the quotient space $\Gamma/PB_3(\rp)$ is the marked graph illustrated in Figure~\ref{fig:GammaQ8}.
\begin{figure}[!h]
\centering
\begin{tikzpicture}
\draw[ gray, thick] (-2,0) -- (2,0);
\filldraw[black] (0,0) circle (.5pt)
node[anchor=north] {\Z/2};
\filldraw[black] (-2,0) circle (2pt) node[anchor=north] {\Z/4};
\filldraw[black] (2,0) circle (2pt) node[anchor=north] {$\quat$};
\end{tikzpicture}
\caption{The quotient space $\Gamma/\quat$}
\label{fig:GammaQ8}
\end{figure}
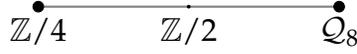
By Bass-Serre theory of groups acting on trees, we obtain the following decomposition of $PB_3(\rp)$.

\begin{prop}\label{prop:amalgpb3}
The group $PB_3(\rp)$ is isomorphic to the amalgamated product $\Z/4 \bigast_{\Z/2}\quat$.
\end{prop}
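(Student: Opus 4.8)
The plan is to realise $PB_3(\rp)$ as a group acting on a tree whose quotient is the segment of Figure~\ref{fig:GammaQ8}, and then to invoke the fundamental theorem of Bass-Serre theory. Since $\Gamma$ is homotopy equivalent to a wedge of two circles, $\pi_1(\Gamma)\cong F_2$, and its universal cover $T=\widetilde{\Gamma}$ is a tree on which $F_2$ acts freely by deck transformations with quotient $T/F_2=\Gamma$. The cellular action of $\quat$ on $\Gamma$ realises geometrically the action of $\quat$ on $F_2=\pi_1(\Gamma)$ given in~(\ref{eq:actionQ8}), so the main step is to upgrade this to an action of the whole group $PB_3(\rp)\cong F_2\rtimes\quat$ on $T$.

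First I would lift the $\quat$-action on $\Gamma$ to $T$. As $T$ is simply connected, every self-map of $\Gamma$ lifts, so the lifts of the $\quat$-action form a group fitting in an extension $1\to F_2\to \widetilde{\quat}\to \quat\to 1$ which induces precisely the action~(\ref{eq:actionQ8}) on $F_2$. Because the vertex $o$ is fixed by all of $\quat$, I would choose a lift $\widetilde{o}\in T$ over $o$ and require lifts to fix it; since $F_2$ acts freely, the stabiliser of $\widetilde{o}$ maps isomorphically onto $\quat$, splitting the extension and identifying $\widetilde{\quat}$ with $F_2\rtimes\quat\cong PB_3(\rp)$. This produces an action of $PB_3(\rp)$ on $T$ with $T/PB_3(\rp)=(T/F_2)/\quat=\Gamma/\quat$, the single edge of Figure~\ref{fig:GammaQ8}.

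Next I would read off the vertex and edge stabilisers. Since $F_2$ acts freely and the chosen lifts fix $\widetilde{o}$, these coincide with the $\quat$-stabilisers already computed on $\Gamma$: the vertex over $o$ has stabiliser $\quat$, the vertex over the image of $u$ (equivalently $v$, the two being exchanged by $a$) has stabiliser $\Z/4$, and the edge has stabiliser $\Z/2$. One checks that the edge group $\langle a^2\rangle=Z(\quat)$ embeds into $\quat$ as its unique subgroup of order two and into $\Z/4=\langle b\rangle$ as $\langle b^2\rangle$, consistently with the relation $b^2=a^2$. As the underlying graph $\Gamma/\quat$ is a single edge, hence a tree, the fundamental theorem of Bass-Serre theory yields $PB_3(\rp)\cong \Z/4 \bigast_{\Z/2}\quat$ with no stable-letter (HNN) contribution.

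The point that most needs securing is that the action of $PB_3(\rp)$ on $T$ is \emph{without inversions}, i.e.\ no group element exchanges the two endpoints of an edge; this is exactly why the midpoints $u,v$ were introduced, so that the orientation-reversing element $b$ fixes both endpoints of each edge rather than flipping it. Once this is established, the identification $T/PB_3(\rp)=\Gamma/\quat$ together with the stabiliser computation makes the Bass-Serre dictionary immediate, and no further verification is required since the quotient graph is a segment.
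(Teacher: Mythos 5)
Your proof is correct and follows essentially the same route as the paper: the paper also realises $F_2\rtimes\quat\cong PB_3(\rp)$ via the $\quat$-action on the subdivided wedge $\Gamma$, computes the stabilisers $\quat$, $\Z/4$ and $\Z/2$, and invokes Bass--Serre theory for the resulting action on the Bass--Serre tree with quotient the segment of Figure~\ref{fig:GammaQ8}. The only difference is that you spell out the lifting of the action to the universal cover and the splitting of the extension at a lift of $o$, details the paper leaves implicit.
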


\begin{rem}
Using the notation of~\repr{vBpresent}, if $n\in \N$, the pure braid group $PB_n(\rp)$ is generated by $\brak{\rho_i}_{1\leq i\leq n} \cup \brak{B_{i,j}}_{1\leq i<j\leq n}$, where $B_{i,j}=\sigma_{j-1}\cdots \sigma_{i+1} \sigma_i^2 \sigma_{i+1}^{-1}\cdots \sigma_{j-1}^{-1}$~\cite[Theorem~4]{GGgeom}. According to~\cite{GG3}, in $PB_3(\rp)$, a copy of $\quat$ is generated by $\rho_1\rho_3$ and $\rho_3\rho_2$. The element $\rho_3\rho_2\rho_1$ is of order $4$, and may taken as a generator of the $\Z/4$-factor of \repr{amalgpb3}. Further, the subgroup generated by these three elements contains $\rho_1$, $\rho_2$ and $\rho_3$, and so using the relations of~\repr{vBpresent} and~\cite[Lemma~17]{GG3}, it also contains all of the $B_{i,j}$ for $1\leq i<j\leq 3$, so this subgroup is indeed the whole of $PB_3(\rp)$, and the copies of $\Z/4$ and $\quat$ in the statement of \repr{amalgpb3} may be taken to be $\ang{\rho_3\rho_2\rho_1}$ and $\ang{\rho_1\rho_3, \rho_3\rho_2}$ respectively (their intersection is precisely the unique cyclic subgroup of order $2$ generated by the full twist braid).
\end{rem}

We have the following formula for the algebraic $K$-groups of an amalgam of finite groups.

\begin{prop}\cite[3.2, eq.~(3.2)]{JLMP}\label{prop:formulaK}
Let $G=A\bigast_C B$, where $A,B$ and $C$ are finite groups. Then for all $n\in\Z$, the algebraic $K$-theory groups of $\Z[G]$ may be described as follows:
\begin{align}
K_n(\Z[G])\cong & \cokernil \bigl( K_n(\Z[C])\to K_n(\Z[A])\oplus K_n(\Z[B]) \bigr) \oplus\notag\\
& \kernil \bigl( K_{n-1}(\Z[C])\to K_{n-1}(\Z[A])\oplus K_{n-1}(\Z[B]) \bigr) \oplus \bigoplus_{V\in\mathcal{V}} \cokernil_n(V),\label{eq:knzgamalg}
\end{align}
where the last term is a direct sum of various Nil groups corresponding to conjugacy classes of the infinite virtually-cyclic subgroups of $G$.
\end{prop}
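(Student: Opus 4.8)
The plan is to deduce the formula from the Farrell--Jones isomorphism conjecture. Since $G=A\bigast_C B$ is an amalgam of finite groups, it acts on the Bass--Serre tree with finite stabilisers and is therefore virtually free, so the conjecture for lower $K$-theory is known for $G$. Granting this, the assembly map $\calA_{vc}$ identifies $K_n(\Z[G])$ with the equivariant homology $H^G_n(\evc G;\K_{\Z})$, and the entire computation reduces to understanding this group together with explicit models for $\evc G$ and $\efin G$.

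First I would separate the contribution of the infinite virtually-cyclic subgroups from that of the finite ones. Applying the Lück--Weiermann transition to the pair of families $\fin\subseteq\fvc$ produces a $G$-pushout relating $\efin G$ and $\evc G$, whose associated long exact sequence splits off a relative term indexed by the conjugacy classes of maximal infinite virtually-cyclic subgroups $V\in\mathcal V$. For each such $V$, using the standard description of $K_{\ast}$ of an infinite virtually-cyclic group as a Mayer--Vietoris term together with Bass--Nil and Waldhausen--Nil groups, the relative homology contributes exactly the summand $\cokernil_n(V)$. This yields the splitting $H^G_n(\evc G)\cong H^G_n(\efin G)\oplus\bigoplus_{V\in\mathcal V}\cokernil_n(V)$, accounting for the last term of \eqref{eq:knzgamalg}.

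Next I would compute the finite part $H^G_n(\efin G)$ using the Bass--Serre tree $T$ as an explicit model for $\efin G$: it is contractible, and $G$ acts on it with one orbit of vertices of stabiliser $A$, one orbit of vertices of stabiliser $B$, and one orbit of edges of stabiliser $C$. The cellular $G$-pushout expressing $T$ as its vertices glued along its edges gives, after applying $H^G_{\ast}(-;\K_\Z)$ and using induction (so that $H^G_{\ast}(G/A)\cong K_{\ast}(\Z[A])$, and similarly for $B$ and $C$), the Mayer--Vietoris long exact sequence
\begin{equation*}
\cdots \to K_n(\Z[C]) \xrightarrow{\ \alpha_n\ } K_n(\Z[A])\oplus K_n(\Z[B]) \to H^G_n(\efin G) \to K_{n-1}(\Z[C]) \to \cdots,
\end{equation*}
where $\alpha_n$ is the difference of the two inclusion-induced maps. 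Since $T$ is one-dimensional, the Bredon homology of $\efin G$ is concentrated in degrees $0$ and $1$, so the equivariant Atiyah--Hirzebruch spectral sequence degenerates and the sequence breaks into the short exact sequences
\begin{equation*}
0 \to \cokernil(\alpha_n) \to H^G_n(\efin G) \to \kernil(\alpha_{n-1}) \to 0,
\end{equation*}
whose outer terms are precisely the cokernel and kernel summands appearing in \eqref{eq:knzgamalg}.

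The main obstacle is to upgrade these short exact sequences to direct sum decompositions, that is, to produce the splitting asserted in the statement. Degeneration of the spectral sequence only furnishes the extensions, so a genuine section must be exhibited; I would construct it from the naturality of the assembly map and the compatible restriction and inclusion maps between the finite groups $C$, $A$, $B$ and $G$, as carried out in \cite{JLMP}. A secondary point requiring care is the precise identification of the relative Lück--Weiermann terms with the Nil groups $\cokernil_n(V)$, which rests on the classification of the infinite virtually-cyclic subgroups of $G$ into their two standard types and on the known computation of their lower $K$-theory.
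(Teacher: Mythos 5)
The paper does not actually prove this proposition: it is imported verbatim from \cite{JLMP} (their eq.~(3.2)), so there is no internal proof to compare against. Your sketch correctly reconstructs the argument of that reference along essentially the same lines --- the Farrell--Jones conjecture for virtually free groups, the L\"uck--Weiermann pushout together with Bartels' split-injectivity result~\cite{Bart} isolating the Nil summands indexed by conjugacy classes of maximal infinite virtually-cyclic subgroups, and the Mayer--Vietoris sequence coming from the Bass--Serre tree as a one-dimensional model for $\efin G$ --- and you rightly flag the only two non-formal points, namely splitting the resulting short exact sequences and identifying the relative L\"uck--Weiermann terms with Bass, Farrell--Hsiang or Waldhausen Nil groups, as precisely the steps carried out in \cite{JLMP}.
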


\begin{rem}\label{rem:nils}
Let $n$ be an integer. In the term $\oplus_{V\in\mathcal{V}}\cokernil_n(V)$ of the statement of Proposition~\ref{prop:formulaK}, $\cokernil_n(V)$ corresponds to the Bass $\operatorname{\text{NK}}$ groups when $V\cong F\times\Z$, to the Farrell-Hsiang $\operatorname{\text{NK}}_{\alpha}$ groups if $V\cong F\rtimes_{\alpha}\Z$, where $F$ is a finite group in both cases, and to the Waldhausen Nil groups if $V\cong A\bigast_F B$, where $A$ and $B$ are finite groups, and $F$ is a subgroup of index $2$ in both $A$ and $B$.  In the latter case, there is a surjection $p\colon\thinspace V\twoheadrightarrow D_{\infty}$ of $V$ onto the infinite dihedral group $D_{\infty}$ whose kernel is $F$. In this case, the Waldhausen Nil groups are isomorphic to the Farrell-Hsiang Nil groups of $p^{-1}(\Z)\cong F\rtimes_{\alpha}\Z\subset V$, where $\Z\subset D_{\infty}$ is the standard maximal copy of $\Z$ in $D_{\infty}$~\cite{LO}. We refer to these terms as $\nil_n$-groups.
\end{rem}

Using the above propositions, we are able to reprove the theorem of~\cite[p.1888]{JM2}, which is our Theorem~\ref{th:mainA}, in a slightly different manner.


\begin{proof}[Proof of Theorem~\ref{th:mainA}]
We apply~\reqref{knzgamalg} to the decomposition of~\repr{amalgpb3}. 
If $G$ is one of the groups $\Z/2,\Z/4$ or $\quat$ in Proposition~\ref{prop:amalgpb3}, then $K_{i}(\Z[G])$, where $i\leq -1$, $\widetilde{K}_0(\Z[G])$ and $\wh(G)$ are all trivial, with the exception of $\widetilde{K}_0(\Z[\quat])$, which is isomorphic to $\Z/2$ (see for example~\cite[p.40, Table~2.1]{GJM}).

To compute the $\nil_n$-groups for $n\leq 1$, we apply Proposition~\ref{prop:formulaK} to Proposition~\ref{prop:amalgpb3}. By~\cite[Theorem~2]{GG8}, the infinite virtually-cyclic subgroups of $PB_3(\rp)$ are isomorphic to $\Z, \Z/2 \times \Z$ or $\Z/4\bigast_{\Z/2}\Z/4$ and it is immediate to verify that the maximal virtually cyclic subgroups are isomorphic to $\Z/2\times\Z$ or $\Z/4\bigast_{\Z/2}\Z/4$. Their Nil groups are as follows:
\begin{enumerate}[\textbullet]
\item The $\nil_n$ groups of $\Z/2\times\Z$ vanish for $n\leq 1$~\cite{Weib}.
\item For $\Z/4\bigast_{\Z/2}\Z/4$, the corresponding Waldhausen Nil groups are isomorphic to the Nil groups of $\Z/2\times \Z$~\cite{LO}, and so they also vanish for $n\leq 1$.
\end{enumerate}
The result then follows from~\reqref{knzgamalg}.
\end{proof}


\section{The lower $K$-groups of $B_3(\Z[\rp])$}\label{sec:KB3RP2}

In this section, we compute the lower $K$-groups of $B_3(\rp)$. Since $PB_3(\rp)$ is virtually free and of finite index in $B_3(\rp)$, $B_3(\rp)$ is also virtually free. As for $PB_3(\rp)$, $B_3(\rp)$ may also be described as an amalgamated product of finite groups.

\begin{thm}\label{th:amalgamB3}
The group $B_3(\rp)$ is isomorphic to the amalgamated product $\oonestar\bigast_{\dic{12}}\dic{24}$,
where $\oonestar$ is the binary octahedral group, and for $n\geq 2$, $\dic{4n}$ denotes the dicyclic group of order $4n$.
\end{thm}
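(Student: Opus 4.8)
The plan is to mimic the strategy used for $PB_3(\rp)$ in \repr{amalgpb3}, namely to exhibit $B_3(\rp)$ as the fundamental group of a graph of finite groups via a Bass--Serre argument. We already know that $PB_3(\rp)\cong\Z/4\bigast_{\Z/2}\quat$ acts on the tree $T$ dual to the splitting, with quotient the segment of \cite{JLMP} carrying the vertex groups $\Z/4$ and $\quat$ and edge group $\Z/2$. Since $PB_3(\rp)$ is normal of index $6$ in $B_3(\rp)$ with quotient $S_3$ (from the short exact sequence $1\to PB_3(\rp)\to B_3(\rp)\to S_3\to 1$), the full braid group $B_3(\rp)$ acts on the \emph{same} tree $T$, extending the $PB_3(\rp)$-action. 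First I would analyse this extended action: identify the $B_3(\rp)$-orbits of vertices and edges, and for each orbit representative compute its $B_3(\rp)$-stabiliser, which will be a finite group containing the corresponding $PB_3(\rp)$-stabiliser with index dividing $6$.

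The key computations are then the vertex and edge stabilisers. I expect the quotient graph $T/B_3(\rp)$ again to be a single segment, giving a splitting $B_3(\rp)\cong A\bigast_C B$ with $A,B,C$ finite. To identify them concretely I would work with Van Buskirk's presentation (\repr{vBpresent}) together with the generators $\rho_1,\rho_2,\rho_3,\sigma_1,\sigma_2$ and use the known embeddings $B_2(\rp)$ and the finite subgroups of $B_3(\rp)$ classified in \cite{GG3,GG8}. The candidate vertex groups are the stabiliser of the $\quat$-vertex, which should enlarge to the binary octahedral group $\oonestar$ of order $48$, and the stabiliser of the $\Z/4$-vertex, which should enlarge to the dicyclic group $\dic{24}$ of order $24$; the edge group $\Z/2$ should enlarge to $\dic{12}$ of order $12$. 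I would confirm these orders are consistent by an index bookkeeping check: the indices $[\oonestar:\dic{12}]=4$ and $[\dic{24}:\dic{12}]=2$ must match the index behaviour forced by $[B_3(\rp):PB_3(\rp)]=6$ relative to the vertex and edge indices of the $PB_3(\rp)$-splitting.

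The main obstacle will be the precise identification of the enlarged stabilisers as $\oonestar$, $\dic{24}$ and $\dic{12}$ rather than merely determining their orders. To pin this down I would produce explicit generators for each stabiliser inside $B_3(\rp)$ (for instance products of the $\rho_i$ and $\sigma_i$ realising the $S_3$-cosets), verify the defining relations of the binary octahedral and dicyclic groups (e.g.\ $\oonestar\cong\quat\rtimes S_3$-type data, and $\dic{4n}=\setang{a,b}{a^{2n}=1,\,b^2=a^n,\,bab^{-1}=a^{-1}}$), and check that $\dic{12}$ sits as an index-$2$ subgroup of $\dic{24}$ and an index-$4$ subgroup of $\oonestar$ in the required way. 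Once the graph of groups is shown to have these vertex and edge groups with the correct inclusions, the isomorphism $B_3(\rp)\cong\oonestar\bigast_{\dic{12}}\dic{24}$ follows from the fundamental theorem of Bass--Serre theory.
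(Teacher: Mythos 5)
Your route is genuinely different from the paper's, which does not use Bass--Serre theory for $B_3(\rp)$ at all: it fixes presentations of $\oonestar$ and $\dic{24}$, exhibits explicit torsion elements $p=\rho_1\rho_2$, $q=\rho_3\rho_1^{-1}$, $x=a^4$, $r=a^3\Delta_3$, $y=a$, $z=a\Delta_3$ of $B_3(\rp)$ satisfying those presentations and the amalgamation relations (citing \cite{GG14} for the finite subgroups they generate), shows that these elements generate $B_3(\rp)$ by recovering the Van Buskirk generators from them, and then verifies that every relation of \repr{vBpresent} holds in $\oonestar\bigast_{\dic{12}}\dic{24}$, producing mutually inverse homomorphisms.

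The genuine gap in your sketch is the unsupported assertion that $B_3(\rp)$ acts on the same tree $T$ as $PB_3(\rp)$, extending the action coming from \repr{amalgpb3}. Normality and finite index do not give this: conjugation by $g\in B_3(\rp)$ yields a twisted $PB_3(\rp)$-action on $T$, and one must prove both that each twisted action is $PB_3(\rp)$-equivariantly isomorphic to the original one (equivalently, that conjugation by every coset representative preserves the $PB_3(\rp)$-conjugacy classes of the vertex groups $\Z/4$ and $\quat$ and of the edge group $\Z/2$), and that the resulting tree isomorphisms can be assembled into an honest action of $B_3(\rp)$ on $T$; neither point is addressed, and the second is an extension problem in its own right. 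Moreover, even granting the extended action, your index bookkeeping only pins down the orders $48$, $24$ and $12$ of the stabilisers. Identifying them as $\oonestar$, $\dic{24}$ and $\dic{12}$ rather than, say, $\tonestar$ or $\Z/24$ in order $24$, or $\Z/12$ in order $12$, requires the classification of the finite subgroups of $B_3(\rp)$ up to conjugacy together with explicit realisations; this is exactly the content the paper imports from \cite{GG14} and then checks by hand, and it is the part your proposal defers. So the Bass--Serre strategy is plausible, but as written the two steps you take for granted are where essentially all of the proof lies.
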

\begin{proof}
First recall from~\cite[page~198]{Wo} that $\oonestar$ is generated by elements $X,P,Q,R$ that are subject to the following relations:
\begin{equation}\label{eq:presostar}
\left\{ 
\begin{aligned}
& X^{3}=1,\,P^2=Q^2=R^2,\,PQP^{-1}=Q^{-1},\\
& XPX^{-1}=Q,\,XQX^{-1}=PQ,\\
& RXR^{-1}=X^{-1},\, RPR^{-1}=QP,\,RQR^{-1}=Q^{-1},
\end{aligned}
\right.
\end{equation}
where $\ang{P,Q,X}$ is its index~$2$ subgroup isomorphic to $\tonestar$. Taking $\dic{24}$ to be generated by elements $Y$ and $Z$ subject to the relations:
\begin{equation}\label{eq:presdic24}
\text{$Y^6=Z^2$ and $ZYZ^{-1}=Y^{-1}$.}
\end{equation}
This group possesses two subgroups isomorphic to the dicyclic group of order $12$, and for the amalgamating subgroup, we shall take $\dic{12}=\ang{Y^2,Z}$. Then the amalgamated product $\oonestar\bigast_{\dic{12}}\dic{24}$, which we denote by $\widetilde{G}$, is generated by $X,P,Q,R,Y$ and $Z$ that are subject to the relations~\reqref{presostar},~\reqref{presdic24} and:
\begin{equation}\label{eq:amalg}
\text{$Y^2=P^2 X^{-1}$ and $Z=P^2 XR$.}
\end{equation}
From~\reqref{presdic24} and~\reqref{amalg}, it follows that $Y^4=X^{-2}=X$ and $Y^6=P^2$, which is the unique element of $\widetilde{G}$ of order $2$, and so is central. Also, $ZPZ^{-1}= P^2 XR P R^{-1} X^{-1} P^{-2}= X QPX^{-1}=P^{-1}$, and $ZQZ^{-1}= P^2 XR Q R^{-1} X^{-1} P^{-2}= X Q^{-1} X^{-1}= Q^{-1} P^{-1}$. By~\cite{GG14}, the subgroup $H$ of $B_3(\rp)$ generated by $p=\rho_1 \rho_2$, $q=\rho_3\rho_1^{-1}$, $x=a^4$ and $r=a^3 \Delta_3$ is isomorphic to $\oonestar$, and these elements satisfy the presentation~\reqref{presostar} if we replace $P,Q,X$ and $R$ by $p,q,x$ and $r$ respectively. The subgroup $K$ of $B_3(\rp)$ generated by $y=a$ and $z=a\Delta_3$ is isomorphic to $\dic{24}$, and its subgroup $L=\ang{y^2,z}$ is isomorphic to $\dic{12}$. Let $G=\ang{p,q,x,r,y,z}$. Since $p^2 x^{-1}=\Delta_3^2 a^{-4}=a^2=y^2$ and $p^2xr=a^6 a^4 a^3 \Delta_3 =z$, the relations~\reqref{presdic24} and~\reqref{amalg} are also satisfied by the generators of $G$, and so the map $\map{\phi}{\widetilde{G}}[G]$ defined by sending $P,Q,X,R,Y$ and $Z$ to $p,q,x,r,y$ and $z$ respectively extends to a surjective homomorphism. Using the relations~\reqref{presostar}--\reqref{amalg} in $G$, we have $\rho_1=(qp)^{-1} y^3=pqy^3$, $\rho_2= \rho_1^{-1} p=y^{-3}q^{-1}$ and $\rho_3=q\rho_1=py^3$. Further, $\sigma_1 \sigma_2 \sigma_1= \Delta_3= y^{-1}z$ and $\sigma_2 \sigma_1=\rho_3^{-1}a= y^{-3}p^{-1}y$, and so $\sigma_1=y^{-1}z \ldotp y^{-1}py^3= zpy^3$, and $\sigma_2= \sigma_2 \sigma_1 \ldotp \sigma_1^{-1}= y^{-3}p^{-1}y\ldotp y^{-3} p^{-1} z^{-1}=y \ldotp x^{-1} p^{-1} x pz^{-1}= yq^{-1} z^{-1}$, from which we conclude that $G=B_3(\rp)$. 

We now prove that the map $\map{\psi}{\widetilde{G}}[G]$, defined by sending the elements $p,q,x,r,y$ and $z$ to $P,Q,X,R,Y$ and $Z$ respectively, extends to a homomorphism, from which it will follow that $\widetilde{G}\cong B_3(\rp)$. It suffices to check that each of the eight relations~\reqref{artin2}--\reqref{surfacerp2} of $B_3(\rp)$ is sent to a relation of $\widetilde{G}$. We consider these relations in turn, and we make use of~\reqref{presostar}--\reqref{amalg}. We obtain five of these relations as follows:
\begin{equation}\label{eq:psireln}
\left\{
\begin{aligned}
\psi(\rho_3 \sigma_1 \rho_3^{-1})&= PY^3 \ldotp ZPY^3 \ldotp Y^{-3} P^{-1}= PZY^{-3}= Z\ldotp R^{-1} X^{-1}P^{-2} P P^2 XR\ldotp Y^{-3}\\
&=Z\ldotp R^{-1} PQ R \ldotp Y^{-3}=ZP^{-1}Y^{-3}=ZPY^3=\psi(\sigma_1)\\
\psi(\sigma_1^{-1} \rho_1 \sigma_1^{-1})&= Y^{-3} P^{-1} Z^{-1} \ldotp PQY^3 \ldotp Y^{-3} P^{-1} Z^{-1}= Y^{-3} P^{-1} Z^{-1} Q^{-1} Z^{-1}\\
&= Y^{-3} P \ldotp PQ= Y^{-3} Q^{-1}=\psi(\rho_2)\\
\psi(\rho_2^{-1} \rho_1^{-1} \rho_2 \rho_1) &= QY^3 \ldotp Y^{-3}Q^{-1} P^{-1} \ldotp Y^{-3} Q^{-1} \ldotp PQY^3= P^{-1} Y^{-3} P^{-1} Y^3\\
&= P^{-1} Y^{-3} Z\ldotp ZP Y^3 = P^{-1} Z Y^{3} \ldotp ZP Y^3= (ZPY^3)^2=\psi(\sigma_1^2)\\
\psi(\sigma_1 \sigma_2 \sigma_1)&= ZPY^3 \ldotp YQ^{-1} Z^{-1}\ldotp ZPY^3 = ZP X Q^{-1} P \ldotp P^2 X^{-1} \ldotp Y= ZY=Y^{-1}Z\\
&= YXZ^{-1}= YQ^{-1} PX Q^{-1} Z^{-1}= YQ^{-1} Z^{-1} \ldotp ZPY^3 \ldotp YQ^{-1} Z^{-1}\\
&= \psi(\sigma_2 \sigma_1 \sigma_2)\\
\psi(\sigma_1 \sigma_2^2 \sigma_1)&= ZPY^3 \ldotp (YQ^{-1} Z^{-1})^2\ldotp ZPY^3= ZPY^3 \ldotp YQ^{-1} Z^{-1} Y\ldotp  PQY^3\\
&= P^{-1} Y^{-4} PQ Y\ldotp  PQY^3= P^{-1} X^{-1} PQX\ldotp X^{-1} Y \ldotp  PQY^3\\
&= P^{-1} Q\ldotp P^{-2} Y^2\ldotp Y \ldotp  PQY^3= (PQY^3)^2= \psi(\rho_1^2),
\end{aligned}\right.
\end{equation}
Next observe that:
\begin{equation}\label{eq:acong}
\left\{
\begin{aligned}
\psi(a^{-1}\sigma_1 a)&= Y^{-1} \ldotp ZPY^3\ldotp Y= Y^{-1} P^{-1} Y^{-4} Z^2\ldotp Z^{-1}= Y\ldotp Y^{-2}  P^{-1} Y^{-4} P^2\ldotp Z^{-1}\\
&=YX P^{-1} X^{-1} Z^{-1}= YQ^{-1} Z^{-1}=\psi(\sigma_2)\\
\psi(a^{-1}\rho_1 a)&= Y^{-1} \ldotp PQY^3 \ldotp Y=Y^{-3} Q^{-1}\ldotp Q Y^{2} PQY^4= Y^{-3} Q^{-1}\ldotp QP^2 X^{-1} PQ X\\
&= Y^{-3} Q^{-1}=\psi(\rho_2)\\
\psi(a^{-1}\rho_2 a)&= Y^{-1} \ldotp Y^{-3} Q^{-1} \ldotp Y= X^{-1} Q^{-1} XP^{-2} \ldotp Y^3=
PY^3= \psi(\rho_3)\\
\psi(a^{-1}\rho_3 a)&= Y^{-1} \ldotp PY^3 \ldotp Y=Y^{-3}\ldotp Y^2 PY^4=Y^{-3}\ldotp P^2 X^{-1} P X= Y^{-3} P^{-1}Q\\
&= Y^{-3} Q^{-1} P^{-1}= \psi(\rho_1^{-1}). 
\end{aligned}\right.
\end{equation}
To obtain the three remaining equalities $\psi(\rho_1^{-1} \sigma_2 \rho_1)=\psi(\sigma_2)$, $\psi(\sigma_2^{-1} \rho_2 \sigma_2^{-1})=\psi(\rho_3)$ and $\psi(\rho_3^{-1} \rho_2^{-1} \rho_3 \rho_2)=\psi(\sigma_2^2)$, it suffices to conjugate the terms within the parentheses of the first three relations of~\reqref{psireln} by $a^{-1}$ and to use~\reqref{acong}.
\end{proof}

\begin{prop}\label{prop:vcyc}
Up to isomorphism, the infinite virtually-cyclic subgroups of $B_3(\rp)$ are $\Z, \Z/2\times\Z, \Z/4\times\Z, \Z/4\bigast_{\Z/2}\Z/4$ and $\quat\bigast_{\Z/4}\quat$. The maximal virtually cyclic are isomorphic to $ \Z/2\times\Z$ or $\quat\bigast_{\Z/4}\quat$
\end{prop}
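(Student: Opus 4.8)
The plan is to run the Bass--Serre analysis of infinite virtually-cyclic (VC) subgroups on the amalgam decomposition $B_3(\rp)\cong\oonestar\bigast_{\dic{12}}\dic{24}$ of \reth{amalgamB3}. Let $T$ be the associated tree: $B_3(\rp)$ acts on $T$ without inversions, with vertex stabilisers the conjugates of $\oonestar$ and of $\dic{24}$ and edge stabilisers the conjugates of $\dic{12}$. Since $[\dic{24}:\dic{12}]=2$ each $\dic{24}$-vertex has degree $2$, whereas each $\oonestar$-vertex has degree $[\oonestar:\dic{12}]=4$. An infinite VC subgroup $V$ contains a hyperbolic element and so preserves a unique line (its axis $\ell$); its image in $\mathrm{Isom}(\ell)$ is either $\Z$ (first kind, $V\cong F\rtimes\Z$) or $D_{\infty}$ (second kind, $V\cong A'\bigast_F B'$ with $[A':F]=[B':F]=2$), where $F=V\cap\mathrm{Stab}_G(\ell)$ is the torsion of $V$ and $\mathrm{Stab}_G(\ell)$ denotes the pointwise stabiliser of $\ell$. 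I would use throughout that the full twist $\Delta_3^2$ is the unique involution of each of $\oonestar$ and $\dic{24}$ and generates the centre of $B_3(\rp)$, so that it lies in every vertex and edge stabiliser.

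First I determine which finite groups can fix a line, or an adjacent pair of edges, by analysing the action of an $\oonestar$-vertex stabiliser on its four incident edges through the quotient $\oonestar/\langle\Delta_3^2\rangle\cong S_4$, under which the four edges become the natural $S_4$-set. A $3$-cycle fixes only one of the four points, so no element of order $3$ (hence none of order $6$) fixes two consecutive edges of $\ell$; this excludes $\Z/3$ and $\Z/6$ from $F$. The central involution fixes all four edges, while an order-$4$ element maps to a transposition (fixing two edges) or a double transposition (fixing none). Consequently $\mathrm{Stab}_G(\ell)\in\{\Z/2,\Z/4\}$ and the torsion $F$ of any $V$ lies in $\{1,\Z/2,\Z/4\}$.

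Next I assemble the list. In the first kind, $F=1$ gives $\Z$; $F\cong\Z/2$ gives $\Z/2\times\Z$ since $\Delta_3^2$ is central; and $F\cong\Z/4$ gives $\Z/4\times\Z$, the product being direct because (as I verify by exhibiting the ambient $\quat\bigast_{\Z/4}\quat$ below) the translation is a product of two elements each inverting the generator of $F$, hence centralises it. In the second kind the bipartite structure of $\ell$ forces $A'\le\oonestar$ and $B'\le\dic{24}$ with $F=A'\cap\dic{12}=B'\cap\dic{12}$ of index $2$ in each; moreover, since the unique involution $\Delta_3^2$ is central, every flip has order $4$ with square $\Delta_3^2$, so $F\supseteq\langle\Delta_3^2\rangle$ and the case $V\cong D_{\infty}$ cannot occur. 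When $F\cong\Z/2$ the index-$2$ overgroups are cyclic of order $4$, giving $\Z/4\bigast_{\Z/2}\Z/4$; when $F\cong\Z/4$ the order-$8$ overgroup in $\dic{24}$ is its quaternion Sylow subgroup, and on the $\oonestar$-side the relevant $\Z/4$ maps to a transposition rather than to the double transposition carried by the cyclic subgroup of order $8$ of the Sylow group $Q_{16}$, so its order-$8$ overgroups are again copies of $\quat$; this yields $\quat\bigast_{\Z/4}\quat$ and rules out the a priori candidate $\Z/8\bigast_{\Z/4}\quat$. These are exactly the five isomorphism types claimed.

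Finally, for maximality I note that the maximal VC subgroups are the setwise stabilisers of lines, and that such a stabiliser is $\Z/2\times\Z$ or $\Z/4\times\Z$ when $G$ acts on $\ell$ by translations only, and $\Z/4\bigast_{\Z/2}\Z/4$ or $\quat\bigast_{\Z/4}\quat$ when it acts with flips. The statement then reduces to the single fusion fact that $\mathrm{Stab}_G(\ell)\cong\Z/4$ precisely when a flip exists; granting this, $\Z/4\times\Z$ and $\Z/4\bigast_{\Z/2}\Z/4$ are always proper subgroups of a copy of $\quat\bigast_{\Z/4}\quat$ (of index $2$ in the latter case), and $\Z$ sits inside $\Z/2\times\Z$ via the central involution, leaving $\Z/2\times\Z$ and $\quat\bigast_{\Z/4}\quat$ as the only possible maximal types. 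I expect this last biconditional --- equivalently, the precise fusion of the order-$2$ and order-$4$ subgroups of the edge group $\dic{12}$ inside $\oonestar$ and $\dic{24}$, together with the control of which order-$4$ subgroups lie in a cyclic subgroup of order $8$ of $Q_{16}$ --- to be the main obstacle, since it is exactly what separates the genuine list from the larger family of abstractly admissible amalgams $A'\bigast_F B'$.
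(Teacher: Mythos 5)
Your route is genuinely different from the paper's: you work directly with the Bass--Serre tree of the amalgam $\oonestar\bigast_{\dic{12}}\dic{24}$ of \reth{amalgamB3}, whereas the paper embeds $B_3(\rp)$ into $B_6(\St)$, quotes the list $\mathbb{V}(6)$ of admissible isomorphism types from~\cite{GG12}, and then decides realisability case by case using conjugacy-class and centraliser data from~\cite{GG14}. The structural half of your argument is sound and attractive: identifying the four edges at an $\oonestar$-vertex with the natural $S_4$-set, noting that only elements mapping to the identity or to a transposition fix two of the four edges, and exploiting the uniqueness of the involution $\Delta_3^2$, you correctly bound the pointwise line stabiliser by $\Z/4$, exclude $3$- and $6$-torsion and $D_\infty$, force the two overgroups to be $\Z/4$ when $F\cong\Z/2$, and eliminate $\Z/8\bigast_{\Z/4}\quat$ because the cyclic subgroup of order $4$ inside a $\Z/8\leq\oonestar$ covers a double transposition and so fixes no pair of edges. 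That last exclusion is a genuine alternative to the paper's centraliser argument.

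There is, however, a real gap, and it sits exactly where you place it yourself. First, you never rule out $\Z/4\rtimes_{-1}\Z$: your argument that a translation centralises the $\Z/4$ depends on writing it as a product of two flips, which presupposes that every line whose pointwise stabiliser is $\Z/4$ admits a flip in its setwise stabiliser --- the forward half of your ``fusion biconditional'', which you flag as the main obstacle but do not prove. Second, the reverse half (flips force the pointwise stabiliser up to $\Z/4$) is equally unproven, and without it $\Z/4\bigast_{\Z/2}\Z/4$ could be maximal, contradicting the second sentence of the statement; so the entire maximality claim is conditional. Third, the realisations of $\Z/4\times\Z$, $\Z/4\bigast_{\Z/2}\Z/4$ and $\quat\bigast_{\Z/4}\quat$ are deferred to an ``ambient $\quat\bigast_{\Z/4}\quat$ below'' that never appears, so even the existence half of the classification is not established. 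None of this can be extracted from the tree formalism alone: one needs to know which order-$4$ subgroups of the edge group $\dic{12}$ remain of transposition type at every $\oonestar$-vertex along a bi-infinite path and whether flips then exist, i.e.\ concrete fusion and centraliser information. This is precisely what the paper supplies by explicit braid computations (the centraliser $\ang{\rho_1,\sigma_2}$ of $\rho_1\sigma_2$, the subgroup $\ang{a^3,b^2}$, the conjugated copies $a^{-1}\quat^{(1)}a$ and $\tau\quat^{(2)}\tau^{-1}$ together with the element $\beta$ of infinite order, and the identity $x^2=\Delta_3^2$ for any candidate inverting element), and your proposal would need an equivalent amount of explicit work inside $\oonestar$ and $\dic{24}$ to close.
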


\begin{proof}
Let $V$ be an infinite virtually-cyclic subgroup of $B_3(\rp)$, let $\map{\iota}{B_3(\rp)}[B_6(\St)]$ be the embedding of~\cite[Corollary~2]{GG11}, and let $\widetilde{V}=\iota(V)$. Then $V\cong \widetilde{V}$, and it follows from~\cite[p.6, Theorem~5]{GG12} that $V$ is isomorphic to an element of $\mathbb{V}(6)$, where $\mathbb{V}(6)$ is the union of the following isomorphism classes: 
\begin{enumerate}[--]
\item a direct product of the form $\Z/m \times \Z$, where $m\in \brak{1,2,3,4,6}$, or a semi-direct product of the form $\Z_m \rtimes \Z$, where $m\in \brak{3,4,6}$, and the action is multiplication by $-1$.
\item a direct product of the form $F\times \Z$, where $F$ is one of $\quat, \dic{12}, \tonestar$ or $\oonestar$, or a semi-direct product of the form $F\rtimes \Z$, where $F$ is one of $\quat$ or $\tonestar$.
\item an amalgamated product of one of the following forms: $\Z/2m \bigast_{\Z/m} \Z/2m$, where $m\in \brak{2,4,6}$; $\dic{2m} \bigast_{\Z/m} \Z/2m$ or $\dic{2m} \bigast_{\Z/m} \dic{2m}$, where $m\in \brak{4,6}$; or $\oonestar \bigast_{\tonestar} \oonestar$.
\end{enumerate}
Note that the existence of one of the above semi-direct products implies that of a direct product of $\Z$ with the same finite factor.

To decide whether the elements of $\mathbb{V}(6)$ are realised as subgroups of $B_3(\rp)$, we will make use of the following facts from~\cite{GG14}:
\begin{enumerate}[(i)]
\item\label{it:GG14i} $B_3(\rp)$ possesses a single conjugacy class of subgroups isomorphic to $\Z/6$, represented by $\ang{a^2}$, and the centraliser in $B_3(\rp)$ of this subgroup is finite.

\item\label{it:GG14ibis} $B_3(\rp)$ possesses a single conjugacy class of subgroups isomorphic to $\Z/8$, represented by $\ang{b}$. 

\item\label{it:GG14ii} $B_3(\rp)$ possesses four conjugacy classes of subgroups isomorphic to $\Z/4$, represented by $\ang{b^2}$, $\ang{\Delta_3}, \ang{a^3}$ and $\ang{a^3 \Delta_3}$. The centraliser in $B_3(\rp)$ of the first three of these subgroups is finite (the fact that the centralisers of $\ang{b^2}$ and $\ang{a^3}$ are finite was proved in~\cite[Proposition~9]{GG13}). As for the remaining subgroup, $a^3 \Delta_3$ is conjugate to the element $\rho_1 \sigma_2$ which appears in Murasugi's list of representatives of the conjugacy classes of the finite order elements of $B_3(\rp)$. Since $\rho_1$ and $\sigma_2$ commute and are of infinite order, the centraliser of $\rho_1 \sigma_2$ is infinite, and it is shown in~\cite{GG14} that it is exactly $\ang{\rho_1,\sigma_2}$ and is isomorphic to $\Z/4\times \Z$.
\item \label{it:GG14iii} $B_3(\rp)$ possesses three conjugacy classes of subgroups isomorphic to $\quat$, represented by $\ang{\rho_1\rho_2, \rho_3\rho_2}$, $\ang{b^2, \Delta_3 a^{-1}}$ and $\ang{a^3 \Delta_3}$. But up to conjugacy, each of these subgroups contains one of the three elements of order $4$ whose centraliser is finite. So it follows that the centraliser in $B_3(\rp)$ of each of the three subgroups isomorphic to $\quat$ is also finite.
\end{enumerate}

If $G_1 \bigast_F G_2$ is a virtually-cyclic group, where $G_1$ and $G_2$ are finite groups of the same order, and $F$ is a subgroup of index $2$ in both $G_1$ and $G_2$, then it contains a subgroup of index $2$ isomorphic to $F\rtimes \Z$. In order to determine those elements of $\mathbb{V}(6)$ that are realised as subgroups of $B_3(\rp)$, it follows from~(\ref{it:GG14i}) that we may eliminate all of the groups of the form $F\rtimes \Z$, where $F$ is finite and contains either $\Z/6$ or $\Z/3$ as a subgroup of $B_3(\rp)$ (this includes the cases where $F$ is $\dic{12},\tonestar$ or $\oonestar$), and those of the form $G_1 \bigast_F G_2$ whose amalgamating subgroup is $\Z/6$. In a similar manner, by~(\ref{it:GG14iii}), we may eliminate all of the groups of $\mathbb{V}(6)$ of the form $\quat \rtimes \Z$ as a subgroup of $B_3(\rp)$, as well as the group $\quat[16] \bigast_{\quat} \quat[16]$.

Clearly $\Z$ and $\Z/2 \times \Z$ are realised as subgroups of $B_3(\rp)$.
Moreover, since $\Z/2$ is the center of $B_3(\rp)$, any isomorphic copy of $\Z$ is contained in $\Z/2\times \Z$. The remaining possibilities are $\Z/4\times \Z$, $\Z/4\rtimes \Z$, where the action is multiplication by $-1$, $\Z/4\bigast_{\Z/2}\Z/4$, $\quat\bigast_{\Z/4}\quat$ and $\Z/8 \bigast_{\Z/4}\quat$. We analyse these groups in turn.

\begin{enumerate}[--]
\item Using~(\ref{it:GG14ii}), $\Z/4\times \Z$ is realised in $B_3(\rp)$ as the centraliser of $\rho_1 \sigma_2$ and this element represents the unique conjugacy class. A subgroup isomorphic to $\Z\times \Z/4$ cannot be maximal since it will be a subgroup of the form $\quat\bigast_{\Z/4}\quat $, since, up to conjugacy, it is the centraliser of $\rho_1\sigma_2$, see the construction of the latter subgroup below.

\item The semi-direct product $\Z/4\rtimes\Z$, where the action is multiplication by $-1$, is not realised as a subgroup of $B_3(\rp)$. Suppose on the contrary that it were. Then the centraliser of the $\Z/4$-factor is infinite, and up to conjugacy, by~(\ref{it:GG14ii}) we may suppose that this factor is generated by $\rho_1 \sigma_2$. Let $x\in B_3(\rp)$ be a generator of the $\Z$-factor, so $x \rho_1 \sigma_2 x^{-1} =(\rho_1 \sigma_2)^{-1}$. Then $a^3 x\in Z_{B_3(\rp)}(\rho_1 \sigma_2)$, and thus $x=a^{-3} \sigma_1^k \rho_2^l$ by~(\ref{it:GG14ii}), where $k,l\in \Z$. Hence $x^2= (a^{-3} \sigma_1^k \rho_2^l)^2= a^{-6} \ldotp a^3 \sigma_1^k \rho_2^l a^{-3} \sigma_1^k \rho_2^l=a^6=\Delta_3^2$, which is of finite order. We conclude that $B_3(\rp)$ has no subgroup isomorphic to $\Z/4\rtimes\Z$. 

\item The amalgamated product $\Z/4\bigast_{\Z/2}\Z/4$ is realised as a subgroup of $B_3(\rp)$. To see this, consider the subgroups $\ang{a^3}$ and $\ang{b^2}$. They are non conjugate and of order $4$, and their intersection is the unique subgroup $\ang{\Delta_3^2}$ of $B_3(\rp)$ of order $2$. To prove that the subgroup $\ang{a^3,b^2}$ of $B_3(\rp)$ is isomorphic to $\Z/4\bigast_{\Z/2}\Z/4$, by~\cite[Lemma~15]{GG8}, it suffices to show that it contains an element of infinite order. This is  the case because $b^{-2}a^3=\rho_3$, which is indeed of infinite order. So $\ang{a^3,b^2} \cong \Z/4\bigast_{\Z/2}\Z/4$. Similarly to the case of $\Z\times \Z/4$ this group will always be a subgroup of a subgroup isomorphic to $\quat\bigast_{\Z/4}\quat$ hence its will not be maximal.

\item To study the existence of $\quat\bigast_{\Z/4}\quat$, let $\quat^{(1)}=\ang{b^2, \Delta_3 a^{-1}}$, and let $\quat^{(2)}=\ang{a^3, \Delta_3}$ be two of the non-conjugate copies of $\quat$ in $B_3(\rp)$. We have $\Delta_3 a^{-1}=\sigma_1 \rho_3^{-1}$, and so $a^{-1} \Delta_3 a^{-1} a=\sigma_2 \rho_1$, which is one of the elements of order $4$ mentioned above. If $\tau=\sigma_1^{-1} \rho_1 \Delta_3$, we have:
\begin{align*}
\tau a^3 \Delta_3 \tau^{-1} &= \sigma_1^{-1} \rho_1 \Delta_3 a^3 \Delta_3 \ldotp \Delta_3^{-1} \rho_1^{-1} \sigma_1 =\sigma_1^{-1} \rho_1 a^{-3} \Delta_3 \rho_1^{-1} \sigma_1= \sigma_1^{-1} \rho_1 \ldotp \rho_1^{-1} \rho_2^{-1} \rho_3^{-1} \ldotp \rho_3 \Delta_3 \sigma_1\\
&= \sigma_1^{-1}\rho_2^{-1} \Delta_3 \sigma_1=\sigma_1^{-1} \Delta_3 \rho_2 \sigma_1= \sigma_2 \rho_1.
\end{align*}
Since $\quat^{(1)}$ and $\quat^{(2)}$ are non-conjugate, it follows that $a^{-1}\quat^{(1)}a \cap \tau \quat^{(2)} \tau^{-1}= \ang{\sigma_2 \rho_1}$. As in the case of $\Z/4\bigast_{\Z/2}\Z/4$, to prove that the subgroup $H$ of $B_3(\rp)$ generated by $a^{-1}\quat^{(1)}a$ and $\tau \quat^{(2)} \tau^{-1}$ is isomorphic to $\quat\bigast_{\Z/4}\quat$, it suffices to exhibit an element of $H$ of infinite order. Let $\beta= (a^{-1} b^2 a)^{-1} \ldotp \tau a^3 \tau^{-1}$. Then $\beta \in H$, and since $a^{-1} b^2 a= \rho_3 \rho_2$, we have:
\begin{align*}
\beta&= \rho_2^{-1} \rho_3^{-1} \ldotp \tau a^3 \tau^{-1}= \rho_2^{-1} \rho_3^{-1} a^3 \ldotp 
\sigma_1 \rho_1^{-1} \Delta_3^{-1} \ldotp \Delta_3^{-1} \rho_1^{-1} \sigma_1= \rho_2^{-1} \rho_3^{-1} \ldotp \rho_3 \rho_2 \rho_1 \ldotp \sigma_1 \rho_1^{-1} \Delta_3^{-2} \rho_1^{-1} \sigma_1\\
&= \Delta_3^{-2} \rho_1  \sigma_1 \ldotp \sigma_1^{-1} \sigma_2^{-2} \sigma_1^{-1} \ldotp \sigma_1 = \Delta_3^{-2} \rho_1 \sigma_2^{-2}=\Delta_3^{-2} \ldotp \rho_1 \sigma_2 \ldotp \sigma_2^{-3}.
\end{align*}
From above, $\rho_1\sigma_2$ is of order $4$ and commutes with $\sigma_2$, so $\beta^4=\sigma_2^{-12}$, which is of infinite order, and thus so is $\beta$. We conclude that $H\cong \quat\bigast_{\Z/4}\quat$ and this is clearly a maximal subgroup.

\item The amalgamated product $\Z/8 \bigast_{\Z/4}\quat$ is not realised as a subgroup of $B_3(\rp)$. Suppose on the contrary that there exists such a subgroup $K$. Conjugating if necessary and using~(\ref{it:GG14ibis}), we may suppose that the $\Z/8$-factor of $K$ is generated by $b$, so the amalgamating subgroup is $\ang{b^2}$. By~\cite[pp.17--18]{GG12}, $K$ contains a subgroup of the form $\ang{b^2} \rtimes L$, where $L\cong \Z$ (the quotient $K/\ang{b^2}$ is isomorphic to the infinite dihedral group $\Z \rtimes \Z/2$, and the subgroup in question is the inverse image by the canonical projection of the $\Z$-factor). But this implies that the centraliser of $\ang{b^2}$ is infinite, which contradicts~(\ref{it:GG14ii}). \qedhere
\end{enumerate}
\end{proof}

In order to compute the $K$-groups of the group ring of $B_3(\rp)$ using \repr{formulaK} and \reth{amalgamB3}, we require the $K$-groups of the group rings of the factors of the given amalgamated product decomposition of $B_3(\rp)$. These $K$-groups may be found in~\cite[p.~40, Table~2.1]{GJM}, and are as follows.
\begin{center}
\bgroup
\renewcommand*{\arraystretch}{1.3}
\begin{tabular}{|c|c|c|c|}
\hline
& $K_{-1}$ & $\widetilde{K}_0$ & $\wh{}$\\
\hline
$\oonestar$ &$\Z/2\oplus \Z$&$\Z/2\oplus\Z/2$&\Z\\
\hline
$\dic{24}$&$\Z/2\oplus\Z\oplus\Z$& $\Z/2\oplus\Z/2\oplus\Z/2$ &$\Z$\\
\hline
$\dic{12}$&$\Z$&$\Z/2$&$0$\\
\hline
\end{tabular}
\egroup
\end{center}

This enables us to prove Theorem~\ref{th:mainB}, which is the computation of the lower algebraic $K$-groups of $\Z[B_3(\rp)]$.


\begin{proof}[Proof of Theorem~\ref{th:mainB}]
To obtain the given isomorphisms, we compute the factors of~\reqref{knzgamalg} for $n\leq 1$. First recall that for finite groups $G$, the assignment $G\mapsto K_{\ast}(\Z[G])$ is a Mackey functor and satisfies $p$-hyper-elementary induction~\cite{C}. The maximal $2$-hyper-elementary subgroups (split extensions of a $2$-group by a cyclic group of odd order) of $\oonestar$ are isomorphic to $\dic{12}$ or $\quat[16]$, and by the Mackey formula~\cite[Theorem~10.13]{CR}, we have:
\begin{equation}\label{eq:indO}
K_{\ast}(\Z[\oonestar])\cong K_{\ast}(\Z[\dic{12}])\oplus K_{\ast}(\Z[\quat[16]]),
\end{equation}
where the isomorphism is induced by that of the isomorphism of \reth{amalgamB3}, from which it follows that the homomorphism $K_{\ast}(\Z[\dic{12}])\to K_{\ast}(\Z[\oonestar])$ induced by inclusion is the identity on the corresponding summand. In particular, in our situation, the second summand of~\reqref{knzgamalg} involving the kernel is trivial for all $n\leq 1$.

Using the above table, we analyse the homomorphisms $K_n(\Z[\dic{12}])\to K_n(\Z[\oonestar])\oplus K_n(\Z[\dic{24}])$ for $n\leq 1$  that appear in the statement of Proposition~\ref{prop:formulaK}:
\begin{enumerate}[--]
\item for $\wh$ ($n=1$), the homomorphism is $0\to \Z\oplus\Z$, its kernel is $0$ and its cokernel is $\Z\oplus\Z$.

\item for $\widetilde{K}_0$ ($n=0$), the homomorphism is $\Z/2\to (\Z/2)^2\oplus (\Z/2)^3$, its kernel is $0$ by the isomorphism~(\ref{eq:indO}), and so its cokernel is isomorphic to $(\Z/2)^4$.

\item for $K_{-1}$, the homomorphism is $\Z\to A\oplus B$, where $A=K_{-1}(\Z[\oonestar])\cong \Z/2\oplus \Z$ and $B=K_{-1}(\Z[\dic{24}])\cong \Z/2\oplus\Z\oplus\Z$. It is induced by the inclusion of $\dic{12}$ in each of $\dic{24}$ and $\oonestar$, and on $A$, it is the identity on the $\Z$-component and $0$ on the $\Z/2$-component using the isomorphism~(\ref{eq:indO}). It remains to determine the image of the homomorphism in $B$. We make use of the following presentations of $\dic{4m}$ for $m=3$ and $m=6$:
\begin{equation*}
\text{$\dic{12} =\bigl\langle w,z \; \bigl\lvert \; w^3=z^2,\, zwz^{-1}=z^{-1}
\bigr. \bigr\rangle$ and $\dic{24} =\bigl\langle x,y \; \bigl\lvert \; x^6=y^2,\, yxy^{-1}=y^{-1} \bigr. \bigr\rangle$.}
\end{equation*}
In order to apply the short exact sequence of Theorem~\ref{thm:negK}, we analyse the $p$-singular conjugacy classes of $G$, where $G$ is $\dic{12}$ or $\dic{24}$. These two groups possess $p$-singular classes for $p=2,3$, and we use $\operatorname{\text{SC}}_p$ to denote the corresponding $p$-singular classes. These conjugacy classes are given in the following tables:
\begin{center}
\bgroup
\renewcommand*{\arraystretch}{1.2}
\addtolength{\tabcolsep}{-0.15em}
\begin{tabular}{c|c|c|c|c|}
Order of element   &$2$&$4$&$6$ &$12$ \\ \hline
 $\operatorname{\text{SC}}_2$-classes in $\dic{12}$  & $\{w^3$\}&$\{z,zw^2,zw^4\}, \{zw,zw^3,zw^5\}$& $\{w,w^5\}$ & \\
 $\operatorname{\text{SC}}_2$-classes in $\dic{24}$  &$ \{x^{6}\}$&$\{x^3,x^9\}, \{yx^i\, \vert\, \text{$i$ odd}\}, \{yx^i\, \vert\, \text{$i$ even}\}$ & $\{x^2,x^{10}\}$&$\{x,x^{11}\}, \{x^5,x^7\}$
\end{tabular}
\egroup
\end{center}
\begin{center}
\bgroup
\renewcommand*{\arraystretch}{1.2}
\begin{tabular}{c|c|c|c|}
Order of element   &$3$&$6$&$12$  \\ \hline
$\operatorname{\text{SC}}_3$-classes in $\dic{12}$  & $\{w^{2},w^{4}\}$&$\{w,w^5\}$ & \\
$\operatorname{\text{SC}}_3$-classes in $\dic{24}$ & $\{x^{4},x^8\}$&$\{x^2,x^{10}\}$&$\{x,x^{11}\},\{x^5,x^7\}$
\end{tabular}
\egroup
\end{center}

It follows from Section~\ref{sec:K_1}, equation \ref{eqn:rankSC} and Theorem~\ref{thm:negK} that $\operatorname{\text{SC}}(G)$ is generated by the virtual characters of the irreducible $\Q_p$-representations for $p=2,3$, and the homomorphism $\widetilde{K}_0(\Q[G])\to \operatorname{\text{SC}}(G)$ is given by sending each irreducible $\Q$-representation of $G$ to its character.
The rational irreducible representations for $\dic{12}$ and $\dic{24}$ may be found in~\cite[Section~2.5.1]{GJM}, see also~\cite[Section~6]{Ya} for a detail description of these representations. 
In both cases, the characters are non trivial for linear representations. For representations of dimension strictly greater than $1$, the characters are trivial in classes of elements of order $4$ that contain elements of the form $yx^i$ for $i$ even and odd, and equal to $2\operatorname{\text{Re}} \xi^{3}_d$ for a suitable primitive $d$th-root of unity with $d>2$ for the conjugacy class of the form $\brak{x^3,x^9}$ in the case of $\dic{24}$. Moreover, for $p=2,3$, the $\Q_p$- and $\FF_p$-conjugacy classes were described for $\dic{12}$ and $\dic{24}$ in~\cite[Proposition~26, and the proofs of Theorem~25 and Proposition~29]{GJM}. For $\dic{12}$, there is a single $\Q_2$-conjugacy class of elements of order $4$ and $K_{-1}(\Z[\dic{12}])\cong\Z$ is generated by this class, while for $\dic{24}$ there are $3$ $\Q_2$-conjugacy classes of elements of order $4$ namely $\{x^3,x^9\},\{yx^i\, \vert\, \text{$i$ odd}\}$ and $\{yx^i\, \vert\, \text{$i$ even}\}$, and the corresponding character is $2 \operatorname{\text{Re}} \xi_d$ on the first class and $0$ on the latter two classes. Since $K_{-1}([\dic{24}]) \cong \Z/2\oplus\Z\oplus\Z$, the $\Z/2$-summand is generated by the first class and the free part is generated by latter two classes. Now, the homomorphism $\dic{12}\to \dic{24}$ sends $w$ to $x^2$ and $z$ to $y$,
thus $zw$ 
is sent to $yx^2$, and the homomorphism induced in $\Z\to B$ is trivial on the $\Z/2$ component, the identity on one of the $\Z$-components and trivial on the other. 
It follows that the homomorphism $\Z \to A\oplus B$ sends $1$ to $(0,0,1,1,0)$, where $A\oplus B$ is identified with $(\Z/2)^2\oplus \Z^3$, and thus its cokernel is isomorphic to $(\Z/2)^2\oplus \Z^2$.
\end{enumerate}

As for the remaining $\nil_i$-terms in~\reqref{knzgamalg}, T.~Farrell and L.~Jones proved that for a virtually-cyclic group $V$ we have $K_n(\Z[V])=0$ for $n\leq -2$, and that $\nil_{-1}=0$~\cite{FJ}. To compute $\nil_n$ for $n=0,1$, we analyse the infinite virtually-cyclic groups of $B_3(\rp)$ given by Proposition~\ref{prop:vcyc}. 
\begin{enumerate}[--]
\item The corresponding Nil groups of $\Z\times \Z/2$  are all trivial (see the proof of Theorem~\ref{th:mainA}).
\item Lastly, by~\cite{LO}, the Waldhausen Nil groups of $Q_8\bigast_{\Z/4}Q_8$ are isomorphic to the Farrell Nil groups of $\Z/4\rtimes\Z$, but by Proposition~\ref{prop:vcyc}, in our case this is isomorphic to $\Z/4\times\Z$, and the result was given in \cite{Weib}.\qedhere
\end{enumerate}
\end{proof}

\begin{rem}
In the above description, each conjugacy class of a group isomorphic to $Q_8\bigast_{\Z/4}Q_8$ contributes to $\nil_n$, $n=0,1$, with a countably-infinite direct sum of copies of $\Z/2$. By~\cite[Theorem~11]{JL}, and using the fact that an amalgam of finite groups is word hyperbolic, there are infinitely many conjugacy classes of maximal virtually-cyclic subgroups. As the group $B_3(\rp)$ is countable, there are countably many conjugacy classes of infinite virtually-cyclic subgroups, and hence for $n=0,1$, we have isomorphisms $\nil_n\cong\oplus_{\infty}\oplus_{\infty}\Z/2$. 
\end{rem}


\section{Mapping class groups of non-orientable surfaces}\label{sec:mcgnonor}

Let $n\geq 1$. In order to obtain a useful model for the classifying space for the virtually-cyclic isotropy of the $n$-string braid groups $B_n(\St)$ of the $2$-sphere, in~\cite{GJ} we analysed the space of the mapping class group $\mcg{\St}{n}$ of the sphere with $n$ marked points. A key ingredient in the study of these groups is the fact that the latter is the quotient of the former by its centre, which is cyclic of order $2$. A similar relation holds if we replace $\St$ by $\rp$. Another important feature is the natural relation between the geometric properties of a non-orientable surface and those of its orientable double cover.

Let $N$ be a compact non-orientable surface without boundary, let $S$ be its orientable double covering, and let:
\begin{equation}\label{map:dbcover}
\pi\colon\thinspace S\to N
\end{equation}
be the associated $2$-fold covering map. This map induces injective homomorphisms between $B_n(N)$ and $B_{2n}(S)$, and between $\mcg{N}{n}$ and $\mcg{S}{2n}$~\cite{GG11}. We will use these relations in what follows, the aim being to describe a model for the classifying space for the virtually-cyclic isotropy of $\mcg{N}{n}$ in a manner similar to that of the orientable case described in~\cite{GJ}.

Let $N_{g,k}$ be a non-orientable surface without boundary of genus $g\geq 1$ and with $k\geq 0$ marked points. We denote the set of these marked points by $P=\{x_1,\ldots ,x_k\}$. Let $S_{g-1,2k}$ be the orientable double cover of $N_{g,k}$ with covering map $\pi\colon \thinspace S_{g-1,2k}\to N_{g,k}$, where $g\geq 0$, and where the marked points of $S_{g-1,2k}$ are the elements of $\pi^{-1}(P)$. If $J \colon \thinspace S_{g-1}\to S_{g-1}$ denotes the deck transformation associated to the covering $\pi$ then $\pi^{-1}(P)= \{y_1,J(y_1),\ldots y_k,J(y_k) \}$ where $\pi(y_i)=\pi(J(y_i))=x_i$ for all $i=1,\ldots,k$~\cite{GGM}.

If $f \colon\thinspace N\to N$ is a diffeomorphism, then by classical covering space theory, there are exactly two lifts $f_1,f_2 \colon\thinspace S\to S$ of $f$, and one of them, say $f_1$, preserves the orientation of $S$. Furthermore, this lifting preserves homotopies, and if $[g]$ denotes the isotopy class of a diffeomorphism, the assignment $[f]\mapsto [f_1]$ defines a homomorphism: 
\begin{equation}\label{eq:embmcgNS}
\phi\colon\thinspace \mcg{N_g}{k}\to \mcg{S_{g-1}}{2k}.
\end{equation}
See~\cite{HT} for the unmarked case and~\cite{CX} for the case with marked points. The importance of this homomorphism  is the following result.
\begin{thm}\label{thm:injective}
With the above notation, the homomorphism given in~\reqref{embmcgNS} satisfies:
\begin{enumerate}
\item for all $g\geq 3$, $\phi\colon\thinspace \mcgclosed{N_g} \to \mcgclosed{S_{g-1}}$ is injective.
\item for all $g,k\geq 1$, $\phi\colon\thinspace \mcg{N_g}{k}\to \mcg{S_{g-1}}{2k}$ is injective.
\end{enumerate}
\end{thm}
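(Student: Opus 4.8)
The plan is to prove injectivity of $\phi$ by a lifting-uniqueness argument that reduces the question to understanding which mapping classes of $S_{g-1}$ (with marked points) can restrict to the trivial class on the deck-transformation-equivariant structure. First I would make precise the source of non-injectivity: a class $[f]\in\mcg{N_g}{k}$ lies in $\ker\phi$ exactly when its orientation-preserving lift $f_1$ is isotopic to the identity of $S_{g-1,2k}$ through diffeomorphisms fixing the $2k$ marked points $\pi^{-1}(P)$. The key constraint is that both lifts of any diffeomorphism of $N_g$ commute with the deck transformation $J$, so $f_1$ is $J$-equivariant; I would record this as the basic compatibility that every statement below must respect.

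For part (b), with $g,k\geq 1$, the presence of marked points gives us extra rigidity, and I would argue as follows. Suppose $[f]\in\ker\phi$, so $f_1\simeq \id$ rel $\pi^{-1}(P)$. The two lifts of $f$ are $f_1$ and $J\circ f_1$, and $f$ itself is recovered as $\pi\circ f_1$ (equivalently $\pi\circ J\circ f_1$). If $f_1$ is isotopic to the identity by an isotopy $H_t$ rel the marked points, I want to push this isotopy down to an isotopy of $f$ to the identity on $N_g$ rel $P$. The obstruction is that a downstairs isotopy requires an \emph{upstairs} isotopy that is $J$-equivariant: only a $J$-equivariant isotopy $H_t$ descends via $\pi$. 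So the real content is to replace the given isotopy $H_t$ by a $J$-equivariant one, or else to invoke a result (such as the lifting of Teichm\"uller-space data referenced in items (2)--(3) of the introduction, or the covering-space techniques of~\cite{CX,GGM}) guaranteeing that the lifting homomorphism has trivial kernel precisely because the $J$-equivariant isotopy class of $f_1$ determines $[f]$. Concretely, I would use the fact that the action on the appropriate (equivariant) Teichm\"uller space $\teich(S_{g-1},2k)$ is faithful for mapping class groups when $3g+k$ is large enough, together with the injection $\pi^{\ast}$ of Teichm\"uller spaces respecting the two actions, to conclude that $[f]$ acting trivially forces $[f_1]$ trivial and conversely $[f_1]$ trivial forces $[f]$ trivial.

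For part (a), the unmarked closed case with $g\geq 3$, the hypothesis on the genus is exactly what is needed so that $\mcgclosed{S_{g-1}}$ acts faithfully on Teichm\"uller space and so that no exceptional small-genus phenomena (extra symmetries, non-faithful actions in low genus) interfere; here I would cite~\cite{HT} for the closed unmarked statement and note that the genus bound $g\geq 3$ on $N_g$ translates to $g-1\geq 2$ on the orientable cover, the threshold above which the relevant rigidity holds. In both parts the logical skeleton is identical: a class is in the kernel iff its orientation-preserving lift is isotopic to the identity, and the lift, being $J$-equivariant, descends, so the kernel is trivial.

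The main obstacle I anticipate is the equivariance of the isotopy. It is \emph{not} automatic that an isotopy $f_1\simeq\id$ on $S_{g-1}$ can be taken $J$-equivariant, and without equivariance the isotopy does not project to $N_g$; so the crux of the proof is either (i) to average or modify $H_t$ into a $J$-equivariant isotopy, or (ii) to bypass this by working entirely on the Teichm\"uller level, where the injection $\pi^{\ast}\colon\teich(N_g,k)\hookrightarrow\teich(S_{g-1},2k)$ intertwines the two mapping-class-group actions, so that faithfulness upstairs (for $3g+k$ large, hence away from the low-genus exceptions) immediately yields faithfulness of the composite $\mcg{N_g}{k}\xrightarrow{\phi}\mcg{S_{g-1}}{2k}\to\aut{\teich(S_{g-1},2k)}$ and therefore injectivity of $\phi$. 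I expect the cleanest route is (ii), invoking the three Teichm\"uller-theoretic ingredients itemised in the introduction, since it sidesteps the delicate point-set topology of making isotopies equivariant.
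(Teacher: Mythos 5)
You should first be aware that the paper does not actually prove this theorem: it is quoted from the literature, with part (a) attributed to~\cite[Key Lemma~2.1]{HT} and part (b) to~\cite{CX} and~\cite{GGM}. So the real question is whether your sketch would constitute an independent proof. Your setup is correct and you have put your finger on exactly the right difficulty: $f_1$ commutes with $J$ (both $f_1J$ and $Jf_1$ are the orientation-reversing lift of $f$, hence equal), and the whole content of the theorem is that an isotopy $f_1\simeq\id$ cannot in general be pushed down because it need not be $J$-equivariant. Your route (i) correctly names this obstruction but does not resolve it, and resolving it (via a Birman--Hilden-type equivariant isotopy argument, or by identifying $\im\phi$ with the centraliser of $[J]$) is precisely what the cited sources do.

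The gap is in route (ii), which is the route you say you would actually take. First, the faithfulness you need is not ``faithfulness upstairs'': the intertwining $\pi^{\ast}(\alpha\cdot[\mathfrak{X}])=\phi(\alpha)\cdot\pi^{\ast}([\mathfrak{X}])$ together with injectivity of $\pi^{\ast}$ only shows that $\phi(\alpha)=1$ forces $\alpha$ to act trivially on $\teich(N_g,k)$; to conclude $\alpha=1$ you need the action of $\mcg{N_g}{k}$ on its \emph{own} Teichm\"uller space to be faithful. Second, that faithfulness genuinely fails inside the range claimed by part (b). Take $(g,k)=(1,2)$: the theorem asserts injectivity of $\phi\colon\mcg{N_1}{2}\to\mcg{S_0}{4}$, and indeed $\mcg{\rp}{2}\cong\Z/2\times\Z/2$ embeds in $\mcg{\St}{4}$; but $\teich(N_1,2)\cong\R^{3g-6+2k}=\R$ by the formula the paper quotes from~\cite{PR}, and no group of order $4$ acts faithfully on $\R$ by homeomorphisms (the orientation-preserving part of any finite group of homeomorphisms of $\R$ is trivial). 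So the Teichm\"uller argument cannot prove part (b) for all $g,k\geq 1$; your hedge ``for $3g+k$ large'' is doing real work and excludes cases the theorem covers. Third, even where faithfulness does hold, it would need an independent proof (an Alexander-method argument for non-orientable surfaces), since such statements are themselves usually derived \emph{from} the embedding $\phi$ --- as written, your argument risks circularity. The honest conclusion is that the equivariant-isotopy problem of route (i) cannot be sidestepped, and one must either carry it out or cite~\cite{HT,CX,GGM} as the paper does.
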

Theorem~\ref{thm:injective} identifies the image of $\phi$ with the $J$-invariant elements in $\mcg{S_{g-1}}{2k}$, see~\cite[Key Lemma~2.1]{HT} and~\cite{GGM} for more details.

\begin{cor}\label{cor:ficmcg}
The mapping class group of a non-orientable surface without boundary with $k\geq 0$ marked points satisfies the Farrell-Jones Isomorphism Conjecture. 
\end{cor}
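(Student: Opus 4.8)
The plan is to deduce the conjecture for the non-orientable mapping class group from its known validity in the orientable case, by exploiting the embedding of Theorem~\ref{thm:injective} together with the fact that the Farrell--Jones conjecture is inherited by subgroups. The single deep input is that the mapping class groups $\mcg{S_{g-1}}{2k}$ of orientable surfaces with marked points satisfy the Farrell--Jones conjecture, in the formulation with coefficients recalled in \resec{FJC}. By Theorem~\ref{thm:injective}, the homomorphism $\phi\colon\thinspace \mcg{N_g}{k}\to \mcg{S_{g-1}}{2k}$ of~\reqref{embmcgNS} is injective, so $\mcg{N_g}{k}$ is isomorphic to the subgroup of $J$-invariant elements of a group for which the conjecture is known. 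Since the class of groups satisfying the Farrell--Jones conjecture is closed under passage to subgroups, the conjecture then follows for $\mcg{N_g}{k}$.

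Concretely, I would proceed as follows. First I would invoke the validity of the conjecture for orientable mapping class groups as the one genuinely deep ingredient; no further geometric information about $N$ is required beyond the existence of its orientable double cover $S_{g-1,2k}$. Next I would record, using Theorem~\ref{thm:injective}(b) in the range $g,k\geq 1$ and Theorem~\ref{thm:injective}(a) in the closed case $k=0$, $g\geq 3$, that $\phi$ realises $\mcg{N_g}{k}$ as an honest subgroup of $\mcg{S_{g-1}}{2k}$. Finally I would apply the subgroup-inheritance property of the conjecture to transport its validity from $\mcg{S_{g-1}}{2k}$ down to $\mcg{N_g}{k}$.

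It then remains only to dispose of the low-genus closed cases that fall outside the range of Theorem~\ref{thm:injective}, namely $g\in\{1,2\}$ with $k=0$. Here I would argue directly rather than through the double cover: $\mcgclosed{N_1}=\mcgclosed{\rp}$ is trivial, and $\mcgclosed{N_2}$, the mapping class group of the Klein bottle, is finite. Any finite group $G$ is itself virtually cyclic, so $\evc G$ may be taken to be a point and the assembly map $\calA_{vc}$ is tautologically an isomorphism; hence the conjecture holds in these exceptional cases as well, completing the argument.

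The main obstacle is conceptual rather than computational: everything rests on knowing the Farrell--Jones conjecture for orientable mapping class groups and on its formal closure under subgroups. Both are available --- the former being the substantive geometric statement and the latter an automatic feature once the conjecture is phrased with coefficients in additive categories --- so I expect no genuine difficulty beyond matching up the two exceptional closed surfaces $\rp$ and the Klein bottle, for which Theorem~\ref{thm:injective} does not apply but whose mapping class groups are finite and so covered trivially.
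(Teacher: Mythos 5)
Your argument is correct and coincides with the paper's own proof: both deduce the result from the injectivity of $\phi$ in Theorem~\ref{thm:injective}, the validity of the Farrell--Jones conjecture for orientable mapping class groups together with its inheritance by subgroups (the Bartels--Bestvina formulation), and a direct check for the two exceptional closed cases $\rp$ and the Klein bottle, whose mapping class groups are finite. No gaps.
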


\begin{proof}
For $g\geq 3$ or $g,k\geq 1$, the result follows from Theorem~\ref{thm:injective} and hereditary properties of the version of the Farrell-Jones Isomorphism Conjecture given in~\cite{BaB}. The remaining cases for the projective plane $\rp$ and the Klein bottle ($k=0$ and $g=1,2$ respectively) follow from the fact that the corresponding mapping class groups are finite, more precisely $\mcg{N_1}{0}$ is trivial and $\mcg{N_2}{0}$ is isomorphic to $\Z/2\times \Z/2$.
\end{proof}


\section{Teichm\"uller space for non-orientable surfaces}

The Teichm\"uller space for a non-orientable surface $N_g$ of genus $g$ is similar to that for orientable surfaces, however we have to take into account that the hypotheses regarding orientability or conformality need to be replaced by diffeomorphisms as follows. Let $\widetilde{\operatorname{\text{Diff}}}(N_g,k)$ be the group of diffeomorphisms of $N_g$ that leave invariant the punctures and let $\widetilde{\operatorname{\text{Diff}}}_0(N_g,k)$   be the subgroup of those diffeomorphisms isotopic to the identity. The mapping class group $\mcg{N_g}{k}$ of $(N_g,k)$ is defined by: 
\begin{equation*}
\mcg{N_g}{k}=\widetilde{\operatorname{\text{Diff}}}(N_g,k)/\widetilde{\operatorname{\text{Diff}}}_0(N_g,k).
\end{equation*}
As in the case of a Riemann surface, the Teichm\"uller space of $N_g$ is the space of equivalent classes of `structures' $[\mathfrak{X}]$ on $N_g$, where we require that the change of structures be given by a diffeomorphism of $N_g$. We denote this Teichm\"uller space by $\teich(N_g,k)$, see \cite{CX,PR}. There is a natural action of $\mcg{N_g}{k}$ on $\teich(N_g,k)$ given by:
\begin{equation*}
\alpha \cdot[\mathfrak{X}]=[f^{\ast}\mathfrak{X}],
\end{equation*}
where $\alpha=[f]$ for $\alpha\in \mcg{N_g}{k}$, $[\mathfrak{X}]\in \teich(N_g,k)$ and $f^{\ast}\mathfrak{X}$ is the pullback of $\mathfrak{X}$. The orientable double covering of~(\ref{map:dbcover}) induces a map:
\begin{equation*}
\pi^{\ast} \colon\thinspace \teich(N_g,k) \to \teich(S_{g-1},2k), 
\end{equation*}
which is continuous and injective by~\cite[Lemma~4.2]{CX} and~\cite{FM}. Moreover, N.~Colín and M.~Xicoténcatl proved in~\cite{CX} that the image of $\pi^{\ast}$ is given by:
\begin{equation}\label{eq:impiast}
\im{\pi^{\ast}}=(\teich(S_{g-1},2k))^{J^{\ast}},
\end{equation}
where $J$ is the deck transformation defined in Section~\ref{sec:mcgnonor}, and $(\cdot)^{J^{\ast}}$ denotes the set of fixed points of $J^{\ast}$. The map $J$ reverses the orientation of $S_{g-1}$, and by \cite[Chapter 11]{FM}, it also acts on $\teich(S_{g-1})$ as an isometry.

If $\phi \colon\thinspace \mcg{N_g}{k}\to \mcg{S_{g-1}}{2k}$ is the homomorphism given in~(\ref{eq:embmcgNS}), recall that $\mcg{N_g}{k}$ (resp.\ $\mcg{S_{g-1}}{2k}$) acts on $\teich(N_g,k)$ (resp.\ on $\teich(S_{g-1},2k)$) by isometries. On the other hand, $\pi^{\ast}$ is injective, and it is compatible with the actions in the following sense.

\begin{thm}\cite[Lemma~5.1]{CX}
Let $[\mathfrak{X}]\in \teich(N_g,k)$ and $\alpha\in \mcg{N_{g}}{k}$. Then we have:
\begin{equation*}
\pi^{\ast}(\alpha\cdot [\mathfrak{X}])= \phi(\alpha)\cdot\pi^{\ast}([\mathfrak{X}]).
\end{equation*}
\end{thm}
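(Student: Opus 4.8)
The plan is to verify the equivariance directly on representatives, reducing everything to the naturality of pullback under the lifting map together with the defining commutation relation of a lift across a covering. First I would choose a diffeomorphism $f\colon N_g\to N_g$ preserving the marked points with $\alpha=[f]$, and a representative structure $\mathfrak{X}$ of the class $[\mathfrak{X}]\in\teich(N_g,k)$, so that by definition $\alpha\cdot[\mathfrak{X}]=[f^{\ast}\mathfrak{X}]$ and $\pi^{\ast}([\mathfrak{X}])=[\pi^{\ast}\mathfrak{X}]$. Let $f_1\colon S_{g-1}\to S_{g-1}$ be the orientation-preserving lift of $f$ furnished by the construction preceding~\reqref{embmcgNS}, so that $\phi(\alpha)=[f_1]$; being a lift of $f$ across $\pi$, it satisfies the identity $\pi\circ f_1=f\circ\pi$ of maps $S_{g-1}\to N_g$.

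The core computation is then purely functorial. Using the contravariance of pullback, the left-hand side is $\pi^{\ast}(\alpha\cdot[\mathfrak{X}])=\pi^{\ast}([f^{\ast}\mathfrak{X}])=[\pi^{\ast}(f^{\ast}\mathfrak{X})]=[(f\circ\pi)^{\ast}\mathfrak{X}]$, while the right-hand side is $\phi(\alpha)\cdot\pi^{\ast}([\mathfrak{X}])=[f_1]\cdot[\pi^{\ast}\mathfrak{X}]=[f_1^{\ast}(\pi^{\ast}\mathfrak{X})]=[(\pi\circ f_1)^{\ast}\mathfrak{X}]$. Since $f\circ\pi=\pi\circ f_1$ as maps, the two pulled-back structures $(f\circ\pi)^{\ast}\mathfrak{X}$ and $(\pi\circ f_1)^{\ast}\mathfrak{X}$ are literally equal on $S_{g-1}$, so their Teichm\"uller classes coincide and the two sides agree. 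Note that no isotopy is needed to identify the two classes, since the underlying structures are already equal; the only role of the equivalence relation is in the well-definedness of $\pi^{\ast}$ and of the action, which are established in~\reqref{impiast} and the surrounding discussion (\cite[Lemma~4.2]{CX}).

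The part demanding the most care — rather than being a genuine obstacle — is checking that the formal pullback calculus above is valid for the particular notion of `structure' used to define $\teich(N_g,k)$ in the non-orientable setting, and that all the maps involved are compatible with the marked points and their $\pi$-preimages. Concretely I would confirm that pullback of these structures is contravariantly functorial, that $f^{\ast}\mathfrak{X}$ again defines a point of $\teich(N_g,k)$, and that $\pi^{\ast}\mathfrak{X}$ and $(\pi\circ f_1)^{\ast}\mathfrak{X}$ genuinely lie in $\teich(S_{g-1},2k)$; here the hypothesis that $f_1$ preserves orientation is exactly what guarantees $\phi(\alpha)\in\mcg{S_{g-1}}{2k}$, so that the right-hand action is even defined. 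One should also keep the action convention fixed throughout: with the paper's convention $\alpha\cdot[\mathfrak{X}]=[f^{\ast}\mathfrak{X}]$ the computation is as above, and the same argument goes through verbatim for any consistent variant (inserting inverses if the action is written on the other side), because lifts compose and the orientation-preserving lift of $f^{-1}$ is $f_1^{-1}$. Finally, independence of the choices of representatives $f$ and $\mathfrak{X}$ follows from the already-established well-definedness of the three maps $\alpha\cdot(-)$, $\pi^{\ast}$ and $\phi$.
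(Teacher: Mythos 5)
Your argument is correct and complete: the identity reduces, exactly as you say, to the lifting relation $\pi\circ f_1=f\circ\pi$ together with contravariant functoriality of pullback, and your caveats about the choice of lift and the meaning of `structure' are the right ones to check (note that even the other lift $f_2=J\circ f_1$ would give the same class, since $\pi\circ J=\pi$). The paper itself offers no proof of this statement --- it is quoted verbatim from~\cite[Lemma~5.1]{CX} --- so there is nothing internal to compare against, but your verification is the standard argument and is precisely what that reference carries out.
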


We summarise the above discussion as follows.

\begin{thm}\label{thm:encaje}
Let $N_g$ be a non-orientable compact surface without boundary of genus $g$ and with $k$ marked points. Assume that $g\geq 3$ or that $g,k\geq 1$. Then $\pi$ induces an embedding: 
\begin{equation*}
\pi^{\ast}:\teich(N_g,k)\hooklongrightarrow \teich(S_{g-1},2k).
\end{equation*}
This embedding is compatible with the actions of the corresponding mapping class groups, and its image is a contractible subspace.
\end{thm}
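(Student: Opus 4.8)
The plan is to assemble \reth{encaje} directly from the three ingredients that have just been established, treating it as a summary statement whose content is already distributed across the preceding results. First I would observe that under the hypothesis $g\geq 3$ or $g,k\geq 1$, the map $\pi^{\ast}\colon\thinspace \teich(N_g,k)\to \teich(S_{g-1},2k)$ is continuous and injective by~\cite[Lemma~4.2]{CX}, and that its image is precisely the fixed-point set $(\teich(S_{g-1},2k))^{J^{\ast}}$ by~\reqref{impiast}. To upgrade ``continuous injection'' to ``embedding'' I would identify $\teich(N_g,k)$ with this fixed-point set via $\pi^{\ast}$ and argue that the induced map onto its image is a homeomorphism; since $J^{\ast}$ acts as an isometry on $\teich(S_{g-1},2k)$, its fixed-point set is a closed (hence well-behaved) isometrically embedded subspace, and $\pi^{\ast}$ realises $\teich(N_g,k)$ isometrically onto it.

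The compatibility with the mapping class group actions is immediate from the theorem of Col\'in--Xicot\'encatl quoted just above, namely $\pi^{\ast}(\alpha\cdot[\mathfrak{X}])=\phi(\alpha)\cdot\pi^{\ast}([\mathfrak{X}])$, combined with the injectivity of $\phi$ from \reth{injective}; together these say that the $\mcg{N_g}{k}$-action on $\teich(N_g,k)$ corresponds, through $\pi^{\ast}$ and $\phi$, exactly to the restriction of the $\mcg{S_{g-1}}{2k}$-action to the invariant subspace $\im{\pi^{\ast}}$. I would state this as the assertion that the embedding is equivariant with respect to $\phi$.

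The one genuinely substantive point is the claim that the image is \emph{contractible}. The plan here is to use the classical fact that $\teich(S_{g-1},2k)$ is itself contractible (indeed homeomorphic to a Euclidean space), that $J^{\ast}$ acts on it as a finite-order isometry, and that the fixed-point set of a finite group of isometries of a Teichm\"uller space is contractible. Concretely, $\teich(S_{g-1},2k)$ carries a complete metric of non-positive-curvature type (the Teichm\"uller or Weil--Petersson geometry) for which the fixed-point set of the isometric involution $J^{\ast}$ is a totally geodesic, hence convex, hence contractible subspace; this is the step that actually requires the geometric input about $J$ acting as an isometry, recorded via~\cite[Chapter~11]{FM}, rather than mere point-set topology. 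I expect this contractibility of the fixed-point set to be the main obstacle, since it is where one must invoke the metric structure rather than formally chaining together the cited lemmas.

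Thus the proof reduces to: (i) quote injectivity and continuity of $\pi^{\ast}$ and the identification of its image with $(\teich(S_{g-1},2k))^{J^{\ast}}$; (ii) promote this to an embedding using that $J^{\ast}$ is an isometry with closed fixed-point set; (iii) cite the Col\'in--Xicot\'encatl compatibility formula together with the injectivity of $\phi$ to get equivariance; and (iv) deduce contractibility of the image from convexity of the fixed-point set of the isometric involution $J^{\ast}$ on the contractible, non-positively-curved Teichm\"uller space. Each of (i), (iii) is a direct appeal to a stated result, (ii) is routine once isometry is known, and the weight of the argument rests on (iv).
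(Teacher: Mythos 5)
Your steps (i)--(iii) coincide with the paper's own proof: injectivity and continuity of $\pi^{\ast}$ come from~\cite[Lemma~4.2]{CX}, the image is identified with the fixed-point set of the isometry $J^{\ast}$ via~\reqref{impiast} (which is exactly what the paper uses to see that $(\pi^{\ast})^{-1}$ is continuous, upgrading the continuous injection to an embedding), and equivariance is the quoted Col\'in--Xicot\'encatl formula together with injectivity of $\phi$. Where you diverge is step (iv), and there your route is both heavier and shakier than what is needed. The paper does not argue that the fixed-point set of $J^{\ast}$ is totally geodesic or convex in any metric on $\teich(S_{g-1},2k)$; it simply observes that $\teich(N_g,k)$ is homeomorphic to $\R^{3g-6+2k}$ by~\cite{PR}, hence contractible, and since the image of an embedding is homeomorphic to its domain, the image is contractible --- no curvature input at all. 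Your version of (iv) as stated has a genuine problem: the Teichm\"uller metric is complete but is \emph{not} non-positively curved in the comparison sense (a theorem of Masur), and the Weil--Petersson metric, while negatively curved, is incomplete; so the assertion that the fixed-point set of the isometric involution is ``totally geodesic, hence convex, hence contractible'' cannot be invoked off the shelf and would require the Kerckhoff/Wolpert convexity machinery or the CAT(0) Weil--Petersson completion to make rigorous. You have therefore mislocated the weight of the argument: once the embedding is in hand, contractibility of the image is a formal consequence of the known topology of $\teich(N_g,k)$ itself, which is the step the paper takes.
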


\begin{proof}
Since $\pi^{\ast}$ is injective, it is an embedding. By~\reqref{impiast}, $\im{\pi^{\ast}}$ is the set of fixed points of an isometry, so $(\pi^{\ast})^{-1}$ is continuous. Now $\teich(N_g,k)$ is homeomorphic to $\R^{3g-6+2k}$~\cite{PR}, so is contractible. The conclusion then follows.
\end{proof}

\section{An $\evc$-space for $\mcg{N_g}{k}$}\label{sec:EspaceNg}

In order to have a good understanding of the domain in the FJIC for a discrete group $G$, we require an appropriate model of the space $\evc G$. To obtain such a model in the case of $\mcg{N_g}{k}$, we will follow the ideas of~\cite{GJ} for mapping class groups for orientable surfaces, and make use of Theorem~\ref{thm:encaje} and properties of the embedding $\pi^{\ast}$.

We start by recalling a general construction for an $\evc G$-space given by W.~L\"uck and M.~Weiermann~\cite{LW}. Let $G$ be a discrete group, let $\fvc G$ and $\fin G$ denote the families of virtually-cyclic and finite subgroups of $G$ respectively, and let $\fvcinf G=\fvc G\setminus \fin G$. We define the following relation $\sim$ on $\fvcinf G$:
\begin{align}\label{eq:equivrel}
\text{for all $V,W \in \fvcinf G$, $V\sim W$ if and only if $\left\lvert V\cap W \right\rvert$ is infinite,}
\end{align}
where $\left\lvert U \right\rvert$ denotes the order of the subgroup $U$ of $G$. This is an equivalence relation~\cite[equation~(2.1)]{LW}. Let $[H]$ denote the equivalence class of $H\in \fvcinf G$ under the equivalence relation~\reqref{equivrel}. By~\cite[Sec.~2]{LW}, the group $G$ acts by conjugation on $\fvcinf G$, and this induces an action of $G$ on the quotient set $\fvcinf G/\!\sim$ for which the isotropy subgroup of $[H]$ is given by:
\begin{equation*}
N_{G}[H]=\bigl\{g\in G \, \bigl\vert \, \text{$\lvert gHg^{-1}\cap H\rvert$ is infinite} \bigr. \bigr\}.
\end{equation*}
We define the following family of subgroups of $N_G[H]$:
\begin{equation*}
\calG[H]=\bigl\{ K\in \fvcinf N_G[H] \, \bigl\vert \, \text{$\lvert K\cap H\rvert$ is infinite}\bigr. \bigr\} \bigcup \fin N_G[H].
\end{equation*}
With these ingredients, a construction of a model for $\evc G$ is given by \cite[Theorem 2.3]{LW}. 
In order to obtain a model that is more suited to our groups, we follow the steps of~\cite[Sections~2.2 and~2.3]{GJ} and adapt~\cite[Proposition 9]{LW} to the case of $\mcg{N_g}{k}$.

\begin{prop}\label{prop:evcspace}
Let $I$ denote a complete system of representatives of the $\mcg{N_g}{k}$-orbits in $[\fvcinf \mcg{N_g}{k}]$ under the $\mcg{N_g}{k}$-action via conjugation. For each class $[H]$ of $I$, let $C_H\subset H$ be an infinite cyclic group of finite index in $H$ and maximal in $\Gamma_m\mcg{N_g}{k}$. Choose arbitrary $N_{\mcg{N_g}{k}}(C_H)$-CW-models for $\efin{N_{\mcg{N_g}{k}}(H)}$ and $E_{\calG[H]}N_{\mcg{N_g}{k}}(H)$.
Let $\mathcal{X}=Im(\pi^*)\subset Teich(S_{g-1},2k)$. Let $X$ be the $\mcg{N_g}{k}$-CW-complex given by the following cellular $\mcg{N_g}{k}$-pushout:
\begin{equation}\label{diag:evcMod}
\begin{gathered}
\xymatrix{
\coprod_{[H]\in I} \mcg{N_g}{k} \times_{N_{\mcg{N_g}{k}}(H)} \efin{N_{\mcg{N_g}{k}}(H)}
\ar[d]^{\coprod_{[H]\in I} id_{\mcg{N_g}{k}} \times_{N_{\mcg{N_g}{k}}(H)}f_{H}}
\ar[r]^(0.86){i} &\mathcal{X} \ar[d]\\
\coprod_{[H]\in I} \mcg{N_g}{k} \times_{N_{\mcg{N_g}{k}}(H)} \efin W_{\mcg{N_g}{k}}(H)
\ar[r] & X,}
\end{gathered}
\end{equation}
where $W_{\mcg{N_g}{k}}(H)=N_{\mcg{N_g}{k}}(H)/H$, the action of $N_{\mcg{N_g}{k}}(H)$ on $\efin W_{\mcg{N_g}{k}}(H)$ is that induced by the projection $N_{\mcg{N_g}{k}}(H)\to W_{\mcg{N_g}{k}}(H)$, and either the map $f_{H}$ is a cellular $N_{\mcg{N_g}{k}}(H)$-map for every $[H]\in I$ and $i$ is an inclusion of $\mcg{N_g}{k}$-CW-complexes, or every map $f_{H}$ is an inclusion of $N_{\mcg{N_g}{k}}(H)$-CW-complexes for every $[H]\in I$  and $i$ is a cellular $\mcg{N_g}{k}$-map. Then $X$ is a model for $\evc \mcg{N_g}{k}$. 
\end{prop}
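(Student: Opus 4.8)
The plan is to exhibit \eqref{diag:evcMod} as an instance of the general pushout construction for classifying spaces of Lück and Weiermann \cite[Theorem~2.3]{LW}, following the adaptation \cite[Proposition~9]{LW} as carried out for the orientable case in \cite{GJ}. Concretely, I would feed that machine the model $\mathcal{X}=\im{\pi^{\ast}}$ for the smaller classifying space $\efin{\mcg{N_g}{k}}$ in the top right-hand corner. Two ingredients must then be supplied: \emph{(i)} that $\mathcal{X}$ is indeed a model for $\efin{\mcg{N_g}{k}}$; and \emph{(ii)} that for each class $[H]\in I$ the abstract Lück--Weiermann space $E_{\calG[H]}N_{\mcg{N_g}{k}}(H)$ may be replaced by the pullback of $\efin{W_{\mcg{N_g}{k}}(H)}$ along the projection $N_{\mcg{N_g}{k}}(H)\to W_{\mcg{N_g}{k}}(H)$. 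Granting \emph{(i)} and \emph{(ii)}, the diagram \eqref{diag:evcMod} is precisely the Lück--Weiermann pushout after these two substitutions, and the conclusion that $X$ is a model for $\evc{\mcg{N_g}{k}}$ is immediate.

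For \emph{(i)}, I would check the three defining properties of an $\efin$-model in turn. Contractibility of $\mathcal{X}$ is exactly the content of Theorem~\ref{thm:encaje}, which identifies $\mathcal{X}$ with the contractible image of $\teich(N_g,k)$. Finiteness of all isotropy groups follows because $\mcg{S_{g-1}}{2k}$ acts on $\teich(S_{g-1},2k)$ properly discontinuously with finite point-stabilisers; via the injection $\phi$ of \reqref{embmcgNS} and the equivariance $\pi^{\ast}(\alpha\cdot[\mathfrak{X}])=\phi(\alpha)\cdot\pi^{\ast}([\mathfrak{X}])$ of \cite[Lemma~5.1]{CX}, the $\mcg{N_g}{k}$-stabiliser of a point of $\mathcal{X}$ embeds in such a stabiliser and is therefore finite. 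For the fixed-point condition I would use the description $\mathcal{X}=(\teich(S_{g-1},2k))^{J^{\ast}}$ of \reqref{impiast}: for a finite subgroup $F\leq\mcg{N_g}{k}$ the equivariance of $\pi^{\ast}$ gives $\mathcal{X}^{F}=(\teich(S_{g-1},2k))^{\ang{\phi(F),J^{\ast}}}$, the fixed set of the finite isometry group $\ang{\phi(F),J^{\ast}}$, which is nonempty and contractible by the same Nielsen-realisation and convexity properties of Teichmüller space used in \cite{GJ}.

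For \emph{(ii)}, I would unwind the definition of $\calG[H]$. An infinite virtually-cyclic subgroup $K\leq N_{\mcg{N_g}{k}}(H)$ satisfies $\lvert K\cap H\rvert=\infty$ if and only if the image of $K$ in $W_{\mcg{N_g}{k}}(H)$ is finite; hence $\calG[H]$ coincides with the preimage under the projection of the family $\fin W_{\mcg{N_g}{k}}(H)$. By the standard restriction principle for classifying spaces, $E_{\calG[H]}N_{\mcg{N_g}{k}}(H)$ is then a model pulled back from $\efin{W_{\mcg{N_g}{k}}(H)}$, which is the bottom left-hand term of \eqref{diag:evcMod}; here the maximal infinite cyclic subgroup $C_H$ serves only to fix canonical representatives and to identify $N_{\mcg{N_g}{k}}(H)$ with the Lück--Weiermann stabiliser of $[H]$, exactly as in \cite{GJ}. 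Since $I$ is by definition a system of representatives for the $\mcg{N_g}{k}$-conjugacy classes in $\fvcinf\mcg{N_g}{k}/\!\sim$, the coproducts in \eqref{diag:evcMod} range over the correct indexing set, and the alternative cellular/inclusion hypotheses on $i$ and the $f_{H}$ are arranged by the usual choice of CW-models.

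I expect the genuine obstacle to be the fixed-point contractibility invoked in \emph{(i)}: one must show that for every finite $F$ the set $(\teich(S_{g-1},2k))^{\ang{\phi(F),J^{\ast}}}$ is nonempty and contractible. This is where the geometry of the orientable double cover and the $J$-invariance really enter, as opposed to the purely formal pushout bookkeeping of \emph{(ii)} and the indexing. The nonemptiness is a Nielsen-realisation statement for the finite group $\ang{\phi(F),J^{\ast}}$, while contractibility relies on the convexity of fixed sets in Teichmüller space; both should be quotable from the orientable theory of \cite{GJ} once the embedding $\pi^{\ast}$ and \reqref{impiast} have reduced the non-orientable problem to a statement about $\teich(S_{g-1},2k)$.
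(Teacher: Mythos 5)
Your proposal is correct and follows essentially the same route as the paper: both instantiate the L\"uck--Weiermann pushout with $\mathcal{X}=\im{\pi^{\ast}}$ as the $\efin$-model (via the equivariance of $\pi^{\ast}$ from Theorem~\ref{thm:encaje}) and identify $E_{\calG[H]}N_{\mcg{N_g}{k}}(H)$ with the pullback of $\efin W_{\mcg{N_g}{k}}(H)$. The one point you defer to \cite{GJ} --- replacing the L\"uck--Weiermann isotropy group $N_G[H]$ by the honest normaliser $N_G(H)$ --- is the step the paper singles out and justifies explicitly, via the uniqueness-of-roots property for the finite-index pure subgroup $\Gamma_m\mcg{N_g}{k}=\im{\phi}\cap\Gamma_m\mcg{S_{g-1}}{2k}$, so you should make that argument explicit rather than treat it as bookkeeping.
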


\begin{proof}
First note that since the embedding of Theorem~\ref{thm:encaje} respects the actions of the modular groups $\mcg{N_g}{k}$ and $\mcg{S_{g-1}}{2k}$, $\mathcal{X}$ is a model for $\underline{E} \mcg{N_g}{k}$ where the action is given via the homomorphism $\phi$ of Theorem~\ref{thm:injective}.

As in~\cite[Theorem~6]{P2}, the image of the homomorphism $\phi$ of Theorem~\ref{thm:injective} satisfies the uniqueness of roots property for the pure subgroups of the image of $\phi$. To see this, let $m\geq 3$ be an integer and $\Gamma_m\mcg{S_{g-1}}{2k}$ be the kernel of the action of $\mcg{S_{g-1}}{2k}$ on $H_1(\mcg{S_{g-1}}{2k};\Z/m)$ and take $\Gamma_m\mcg{N_g}{k}=\im{\phi} \cap \Gamma_m\mcg{S_{g-1}}{2k}$. Observe that this is a pure subgroup of $\im{\phi}$ of finite index.
 
A key observation in passing from the L\"uck-Weiermann model to  $G=\mcg{N_g}{k}$ 
is that the group $N_G[H]$, which is the isotropy subgroup for a class $[H]$ of an infinite virtually-cyclic subgroup $H$, may be replaced by an honest normaliser $N_G(H)$ in $G$ in the case where $G=\mcg{N_g}{k}$.
\end{proof}

Recall that $\mcg{N_g}{k}$ acts on the curve complex for $N_g$ as defined in \cite[Lemma 28]{P2}, and that if $H$ is an infinite cyclic group, the normaliser  $N_{\mcg{N_g}{k}}(H)$ may be identified with the isotropy of a simplex in this curve complex for this action. 
We define:
\begin{equation}\label{eq:defell}
\text{$\ell=\frac{3}{2}(g-1)+k-2$ if $g$ is odd and $\ell=\frac{3}{2}g+k-3$ if $g$ is even.}
\end{equation}
For $i=0,1,\ldots, \ell$, let $\calH_{i}$ denote the collection of the normalisers $N_{\mcg{N_g}{k}}(C_V)$ of those infinite cyclic subgroups $C_V$ whose generator has a canonical reduction system that consists of $i$ pairwise-disjoint, simple closed curves. Note that for $i=0$, $\calH_0$ consists of the normalisers $N_{\mcg{N_g}{k}}(C_V)$ for which $C_V$ is generated by a pseudo-Anosov class. The following result is analogous to that of J.~D.~McCarthy~\cite{Mc} in the orientable case.

\begin{prop}\label{prop:Nvc}\cite{P2}
If $f$ is a pseudo-Anosov class in $\mcg{N_g}{k}$, then the normaliser of the group generated by $f$ is virtually cyclic.
\end{prop}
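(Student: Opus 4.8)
The plan is to reduce the statement to the orientable case, where the analogous result is due to McCarthy~\cite{Mc}, by transporting everything through the injective homomorphism $\phi\colon \mcg{N_g}{k}\to \mcg{S_{g-1}}{2k}$ of Theorem~\ref{thm:injective}. Write $H=\ang{f}$, which is infinite cyclic since a pseudo-Anosov class has infinite order. We may assume that we are in the range $g\geq 3$ or $g,k\geq 1$ in which $\phi$ is injective: the only excluded cases, $g\in\{1,2\}$ with $k=0$, have finite mapping class group (see \reco{ficmcg}) and hence admit no pseudo-Anosov class, so there the assertion is vacuous.

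The first step is purely group-theoretic. If $\alpha\in N_{\mcg{N_g}{k}}(H)$, then $\alpha f\alpha^{-1}$ generates $\alpha H\alpha^{-1}=H\cong\Z$, hence $\alpha f\alpha^{-1}=f^{\pm1}$; applying $\phi$ gives $\phi(\alpha)\phi(f)\phi(\alpha)^{-1}=\phi(f)^{\pm1}$, so that $\phi\big(N_{\mcg{N_g}{k}}(H)\big)\subseteq N_{\mcg{S_{g-1}}{2k}}(\ang{\phi(f)})$. Since $\phi$ is injective, $N_{\mcg{N_g}{k}}(H)$ is isomorphic to a subgroup of $N_{\mcg{S_{g-1}}{2k}}(\ang{\phi(f)})$.

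The second, and essential, step is to show that $\phi(f)$ is itself pseudo-Anosov. Recall that $\phi(f)$ is the isotopy class of the orientation-preserving lift of a representative of $f$ to the connected orientable double cover $S_{g-1}$. A pseudo-Anosov representative of $f$ preserves a pair of transverse measured foliations on $N_g$ with dilatation $\lambda>1$; pulling these back along the covering $\pi$ yields a pair of transverse measured foliations on $S_{g-1}$ that are invariant under the lift and still have dilatation $\lambda$. Because $\pi$ is a finite covering with $S_{g-1}$ connected, these pulled-back foliations again have no closed leaves, so the lift is pseudo-Anosov and hence $\phi(f)$ is a pseudo-Anosov class in $\mcg{S_{g-1}}{2k}$. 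Equivalently, one may argue through Teichm\"uller geometry: the class $f$ translates along a Teichm\"uller geodesic in $\teich(N_g,k)$, and since $\im{\pi^{\ast}}$ is the fixed locus of the isometry $J^{\ast}$ and Teichm\"uller geodesics between two points are unique, $\im{\pi^{\ast}}$ is totally geodesic in $\teich(S_{g-1},2k)$; thus $\phi(f)$ translates along an honest Teichm\"uller geodesic there, realises a positive translation length, and is therefore pseudo-Anosov by the Bers classification.

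With $\phi(f)$ pseudo-Anosov, McCarthy's theorem~\cite{Mc} applies in the orientable setting and shows that $N_{\mcg{S_{g-1}}{2k}}(\ang{\phi(f)})$ is virtually cyclic. A subgroup of a virtually-cyclic group is virtually cyclic, so by the inclusion established in the first step $N_{\mcg{N_g}{k}}(H)$ is virtually cyclic, as required. The main obstacle is the second step: one must verify that lifting a pseudo-Anosov class to the orientation double cover produces a pseudo-Anosov class rather than a reducible one, equivalently that the invariant foliations pull back to minimal foliations without closed leaves, and that the notion of pseudo-Anosov used on $N_g$ is compatible with this lift; the remaining steps are then formal.
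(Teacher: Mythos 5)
Your argument is correct: the reduction of $N_{\mcg{N_g}{k}}(\ang{f})$ to a subgroup of $N_{\mcg{S_{g-1}}{2k}}(\ang{\phi(f)})$ via the injection $\phi$, the verification that $\phi(f)$ remains pseudo-Anosov, and the appeal to McCarthy's theorem~\cite{Mc} together with the fact that subgroups of virtually-cyclic groups are virtually cyclic, is a complete proof (and your disposal of the excluded cases $g\in\{1,2\}$, $k=0$ as vacuous is right). The paper gives no proof of its own, citing~\cite{P2}, but it records your key step --- that $\phi$ preserves Nielsen--Thurston type, by~\cite[Theorem~2]{Wu} --- immediately after the statement, so your route is exactly the one the paper envisages.
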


The Nielsen-Thurston classification Theorem for elements in mapping class groups for orientable surfaces is also valid in the non-orientable case~\cite[Theorem 2]{Wu}. Moreover, by~\cite[Theorem 2]{Wu}, the injective homomorphism $\varphi$ of Theorem~\ref{thm:injective} does not alter the Nielsen-Thurston type (pseudo-Anosov, finite order, reducible) of an element of $\mcg{N_g}{k}$.

\begin{defn}\label{def:HH}
With the above notation, let us define the following family of subgroups of $\mcg{N_g}{k}$ by: 
\begin{equation}\label{eq:unionH}
\calH=\bigcup_{i=0}^{\ell} \calH_{i},
\end{equation}
where $\ell$ is as defined in~\reqref{defell}.
\end{defn}
Note that the union in~\reqref{unionH} is disjoint. With these ingredients, we may now prove Theorem~\ref{th:mainC}. 
The proof is similar to that in the case of the sphere given in~\cite[Section~3]{GJ}. 


\begin{proof}[Proof of Theorem~\ref{th:mainC}]
Let $X$ be given by Proposition~\ref{prop:evcspace}. By Corollary~\ref{cor:ficmcg}, $K_s(\Z [\mcg{N_g}{k}])\cong H^G_s(X)$. By~\cite[Theorem~1.3]{Bart}, the inclusion $\mathcal{X}\lhra X$ of~(\ref{diag:evcMod}) induces a split injection in $H^{\mcg{N_g}{k}}_s (-)$ for all $s\in \Z$, which identifies the term  $H_s^{\mcg{N_g}{k}}(\efin \mcg{N_g}{k};K\Z^{-\infty} )$ in~\reqref{KnmodS}. The remaining term is the cokernel of $H^{\mcg{N_g}{k}}(\mathcal{X}; \Z^{-\infty}) \to H^{\mcg{N_g}{k}}(X; \Z^{-\infty})$, which we now analyse. 

Since~(\ref{diag:evcMod}) is a pushout diagram, the cokernels of the homology-induced vertical maps are isomorphic. The induced homomorphisms in homology of each term of the left-hand vertical maps  of~\ref{diag:evcMod} have terms of the form $G\times_VY$ for $G=\mcg{N_g}{k}$ and $V=N_{\mcg{N_g}{k}}(H)$, where $H$ varies over the conjugacy classes of infinite virtually-cyclic subgroups of $\mcg{N_g}{k}$ and suitable universal spaces $Y$ for proper actions. We then make use of the induction isomorphisms $H_{\ast}^G(G\times_VY)\cong H^V_{\ast}(Y)$ to obtain the isomorphisms:
\begin{equation*}
H_{\ast}^{\mcg{N_g}{k}}(\mcg{N_g}{k} \times_{N_{\mcg{N_g}{k}}(H)} \efin{N_{\mcg{N_g}{k}}(H)})
\cong H_{\ast}^{N_{\mcg{N_g}{k}}(H)}(\efin N_{\mcg{N_g}{k}}(H))
\end{equation*}
and
\begin{equation*}
 H_{\ast}^{\mcg{N_g}{k}}(\mcg{N_g}{k} \times_{N_{\mcg{N_g}{k}}(H)} \efin{W N_{\mcg{N_g}{k}}(H)})
\cong H_v^{N_{\mcg{N_g}{k}}(H)}(\efin W N_{\mcg{N_g}{k}}(H)).
\end{equation*}
By naturality of induction, this yields homomorphisms of the form:
\begin{equation*}
H_*^{N_{\mcg{N_g}{k}}(H)}(\efin N_{\mcg{N_g}{k}}(H))\longrightarrow 
H_*^{N_{\mcg{N_g}{k}}(H)}(\efin W N_{\mcg{N_g}{k}}(H)),
\end{equation*}
and by definition, their cokernels are the remaining terms of~(\ref{eq:KnmodS}).

Now for each conjugacy class of an infinite virtually-cyclic subgroup $H$ in $\mcg{N_g}{k}$, we identify each $N_{\mcg{N_g}{k}}(H) $ as the isotropy of a suitable simplex in the curve complex for $\mcg{N_g}{k}$, and this determines an element of $\mathcal{H}_i$ for some $i=0,\ldots ,\ell$, where $\ell$ is the maximal rank of an elementary Abelian subgroup of $\mcg{N_g}{k}$. This rank was computed in~\cite{Ku}.

Let $H$ be an infinite virtually-cyclic subgroup of $\mcg{N_g}{k}$ with a pseudo-Anosov generator. Then $H\in\mathcal{H}_0$n and by Proposition~(\ref{prop:Nvc}), $N_{\mcg{N_g}{k}}(H)$ is virtually cyclic, hence we make take $ 
\efin W N_{\mcg{N_g}{k}}(H))$ to be a point, which yields the terms $H_s^H (E_H\to {\ast})$.
\end{proof}


The braid groups and mapping class groups of $\rp$ are closely related via the following short exact sequence due to G.~P.~Scott~\cite{S}:
\begin{equation}\label{eq:bmod}
1\to \Z/2\to B_n(\rp) \stackrel{p}{\to} \mcg{\rp}{n}\to 1,
\end{equation}
where $n\geq 3$, $\Z/2$ is generated by the `full twist' braid of $B_n(\rp)$ (and is the centre of $B_n(\rp)$), and $p$ is the canonical projection. Note also that this braid is the unique element of $B_n(\rp)$ of order $2$. \reth{amalgamB3} and~\reqref{bmod} allow us to determine the lower $K$-theory of $\Z[\mcg{\rp}{3}]$ in the following manner using the fact that the homomorphism $p$ induces a one-to-one correspondence between the virtually-cyclic subgroups of $B_n(\rp)$ and those of $\mcg{\rp}{n}$ given by $V\mapsto p(V)$ for each virtually-cyclic subgroup $V$ of $B_n(\rp)$~\cite[Proposition 26]{GG12}. We thus obtain the following classification of the isomorphism classes of the infinite virtually-cyclic subgroups of $\mcg{\rp}{3}$.

\begin{prop}\label{prop:vcmcg}
Up to isomorphism, the infinite virtually-cyclic subgroups of $\mcg{\rp}{3}$ are $\Z,\Z/2\times\Z,\Z/2\bigast \Z/2$ and $(\Z/2\times \Z/2) \bigast_{\Z/2}(\Z/2\times\Z/2) $.
\end{prop}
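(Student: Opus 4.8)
The plan is to combine the short exact sequence~\reqref{bmod} with the classification of the infinite virtually-cyclic subgroups of $B_3(\rp)$ already obtained in \repr{vcyc}. Write $z$ for the full twist braid, so that $\ker p=\ang{z}$ is the centre of $B_3(\rp)$ and, as noted after~\reqref{bmod}, the unique subgroup of order $2$. Since $p$ is surjective with finite kernel, the image $p(V)$ of any infinite virtually-cyclic subgroup $V\le B_3(\rp)$ is again infinite virtually cyclic, and conversely every infinite virtually-cyclic subgroup $W\le\mcg{\rp}{3}$ equals $p(p^{-1}(W))$ with $p^{-1}(W)$ infinite virtually cyclic. Hence, by the correspondence $V\mapsto p(V)$ of~\cite[Proposition~26]{GG12}, it suffices to compute $p(V)=V/(V\cap\ang{z})$ for each of the five isomorphism types listed in \repr{vcyc}.

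First I would decide, for each $V$, whether $z\in V$. Because $z$ is the unique element of order $2$ in $B_3(\rp)$, any $2$-torsion element of $V$ coincides with $z$; thus $V\cap\ang{z}$ is trivial precisely when $V$ is torsion-free, that is when $V\cong\Z$, and equals $\ang{z}$ in every other case. This immediately gives $p(V)\cong\Z$ when $V\cong\Z$, and $p(V)\cong(\Z/2\times\Z)/\ang{z}\cong\Z$ when $V\cong\Z/2\times\Z$, since the torsion subgroup of $\Z/2\times\Z$ is generated by $z$. For $V\cong\Z/4\times\Z$, the element $z$ generates the order-$2$ subgroup of the $\Z/4$-factor, so $p(V)\cong(\Z/4/\ang{z})\times\Z\cong\Z/2\times\Z$.

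For the two amalgamated cases I would invoke the standard fact that if $G=A\bigast_C B$ and $N\trianglelefteq G$ is contained in $C$, then $G/N\cong(A/N)\bigast_{C/N}(B/N)$. Using the explicit realisations in the proof of \repr{vcyc}, the amalgamating subgroup of $\Z/4\bigast_{\Z/2}\Z/4$ is exactly $\ang{z}$, namely the common order-$2$ intersection $\ang{\Delta_3^2}$, while the amalgamating subgroup $\ang{\sigma_2\rho_1}\cong\Z/4$ of $\quat\bigast_{\Z/4}\quat$ contains $z=(\sigma_2\rho_1)^2$; in both cases $z$ is central, hence $\ang{z}$ is normal and lies inside the amalgamating subgroup. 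Applying the quotient formula gives $p(V)\cong\Z/2\bigast\Z/2$ when $V\cong\Z/4\bigast_{\Z/2}\Z/4$, and, using $\quat/\ang{z}\cong\Z/2\times\Z/2$ together with $\Z/4/\ang{z}\cong\Z/2$, it gives $p(V)\cong(\Z/2\times\Z/2)\bigast_{\Z/2}(\Z/2\times\Z/2)$ when $V\cong\quat\bigast_{\Z/4}\quat$. Collecting the four distinct isomorphism types $\Z$, $\Z/2\times\Z$, $\Z/2\bigast\Z/2$ and $(\Z/2\times\Z/2)\bigast_{\Z/2}(\Z/2\times\Z/2)$ yields the statement.

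The computations themselves are routine; the only point requiring real care is the application of the quotient-of-amalgam formula, which is why I would first pin down from \repr{vcyc} that $\ang{z}$ lies \emph{inside} the amalgamating subgroup and is central in each amalgam, rather than merely being some order-$2$ subgroup. This is the step most likely to hide a subtlety, since the conclusion that the quotient is again an amalgam over the reduced edge group fails if the normal subgroup being killed is not contained in $C$.
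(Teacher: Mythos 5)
Your proof is correct and takes essentially the same route as the paper: the paper's own proof is a one-line appeal to the correspondence $V\mapsto p(V)$ given by~\reqref{bmod} and \cite[Proposition~26]{GG12}, applied to the list of \repr{vcyc}. Your case-by-case computation of $V/(V\cap\ang{z})$ — including the check that $\ang{z}$ is central and lies in the amalgamating subgroup before quotienting the two amalgams — is exactly the detail that the paper leaves implicit, and it is carried out correctly.
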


\begin{proof}
The result follows by applying the relationship between the virtually-cyclic subgroups of $B_3(\rp)$ and those of $\mcg{\rp}{3}$ given by~\reqref{bmod} and~\cite[Proposition 26]{GG12} to each of the isomorphism classes of the infinite virtually-cyclic subgroups of $B_3(\rp)$ of Proposition~\ref{prop:vcyc}.
\end{proof}

By \reth{amalgamB3} and equation~\reqref{bmod}, $\mcg{\rp}{3}$ is also an amalgam of finite groups, and so we may apply Proposition~\ref{prop:formulaK} to compute the $K$-theory groups of its group ring. Moreover, the lower algebraic $K$-theory of the finite subgroups involved in this amalgam is well understood. This enables us to describe the $K$-theory of $\Z[\mcg{\rp}{3}]$ as follows.

\begin{prop}\label{prop:krp3}
The lower algebraic $K$-theory of $\Z[\mcg{\rp}{3}]$ is as follows:
\begin{enumerate}
\item\label{it:krp3a} the groups $\wh(\mcg{\rp}{3}), \widetilde{K}_0(\Z[\mcg{\rp}{3}])$ and $K_i(\Z[\mcg{\rp}{3}])$, where $ i\leq-2$, are trivial.
\item\label{it:krp3b} $K_{-1}(\Z[\mcg{\rp}{3}])\cong \Z$.
\end{enumerate}
\end{prop}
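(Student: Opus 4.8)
The plan is to realise $\mcg{\rp}{3}$ as an amalgam of finite groups by quotienting the decomposition of \reth{amalgamB3} by the central subgroup of~\reqref{bmod}, and then to apply \repr{formulaK} together with the list of virtually-cyclic subgroups in \repr{vcmcg}. In the notation of \reth{amalgamB3}, the central subgroup $\Z/2$ of $B_3(\rp)$ appearing in~\reqref{bmod} is generated by the unique order-$2$ element $P^2=Y^6$, which lies in the amalgamating subgroup $\dic{12}=\ang{Y^2,Z}$ (since $Y^6=(Y^2)^3$), and hence in each of the three factors $\oonestar$, $\dic{24}$ and $\dic{12}$. Quotienting the decomposition of \reth{amalgamB3} by this central subgroup therefore produces a compatible amalgam decomposition of $\mcg{\rp}{3}\cong B_3(\rp)/(\Z/2)$. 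As $\oonestar/(\Z/2)$ is the octahedral group, isomorphic to $\sn[4]$, as $\dic{24}/(\Z/2)$ is the dihedral group $D_{12}$ of order $12$, and as $\dic{12}/(\Z/2)\cong \sn[3]$, the descended inclusions yield
\begin{equation*}
\mcg{\rp}{3}\cong \sn[4]\bigast_{\sn[3]} D_{12}.
\end{equation*}

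With this decomposition in hand I would apply~\reqref{knzgamalg} for $n\leq 1$, which reduces the computation to the cokernel and kernel terms built from the lower $K$-theory of $\sn[4]$, $\sn[3]$ and $D_{12}$, together with the $\nil$-contributions. The groups $\wh$ and $\widetilde{K}_0(\Z[\cdot])$ vanish for each of these three finite groups, while for $K_{-1}$ one finds, via Carter's formula of \resec{K_1} and the short exact sequence of Theorem~\ref{thm:negK}, that $K_{-1}(\Z[\sn[3]])=K_{-1}(\Z[\sn[4]])=0$ and $K_{-1}(\Z[D_{12}])\cong\Z$; these values can also be read off from the tables of~\cite{GJM}. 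Feeding these into~\reqref{knzgamalg}, the cokernel and kernel maps at $n=1$ and $n=0$ are maps of trivial groups and hence vanish, giving $\wh(\mcg{\rp}{3})=0$ and $\widetilde{K}_0(\Z[\mcg{\rp}{3}])=0$; at $n=-1$ the relevant map is $0\to 0\oplus\Z$, whose cokernel is $\Z$, and the accompanying kernel term coming from $K_{-2}$ is trivial since the negative $K$-groups of finite groups vanish below degree $-1$~\cite{C}. The latter fact also gives $K_i(\Z[\mcg{\rp}{3}])=0$ for $i\leq -2$.

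It remains to verify that all the $\nil_n$-terms in~\reqref{knzgamalg} vanish for $n\leq 1$. By \repr{vcmcg}, the infinite virtually-cyclic subgroups of $\mcg{\rp}{3}$ are $\Z$, $\Z/2\times\Z$, $\Z/2\bigast\Z/2$ and $(\Z/2\times\Z/2)\bigast_{\Z/2}(\Z/2\times\Z/2)$. As in the proof of \reth{mainB}, the Nil groups of $\Z/2\times\Z$ vanish for $n\leq 1$~\cite{Weib}; for the two amalgam types, Remark~\ref{rem:nils} and~\cite{LO} identify the Waldhausen Nil groups with the Farrell--Hsiang Nil groups of the index-$2$ subgroups $1\rtimes\Z\cong\Z$ and $\Z/2\rtimes\Z\cong\Z/2\times\Z$ respectively, the actions being trivial since $\aut{\Z/2}$ is trivial, and both of these vanish for $n\leq 1$. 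Assembling the cokernel computations above with the vanishing of the Nil terms then yields the two stated assertions.

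The main obstacle I anticipate is the $K_{-1}$ computation: pinning down $K_{-1}(\Z[D_{12}])\cong\Z$ and checking that the induced map $K_{-1}(\Z[\sn[3]])\to K_{-1}(\Z[\sn[4]])\oplus K_{-1}(\Z[D_{12}])$ is as claimed, which requires a careful analysis of the $p$-singular conjugacy classes and rational representations of these groups through Theorem~\ref{thm:negK}, exactly in the spirit of the $\dic{12}$--$\dic{24}$ analysis carried out in the proof of \reth{mainB}. The single $\Z$ in the final answer originates entirely from $K_{-1}(\Z[D_{12}])$.
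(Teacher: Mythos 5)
Your proposal is correct and follows essentially the same route as the paper: quotienting the amalgam $\oonestar\bigast_{\dic{12}}\dic{24}$ by the central $\Z/2$ of~\reqref{bmod} to get $\mcg{\rp}{3}\cong S_4\bigast_{S_3}D$ with $D$ dihedral of order $12$, feeding the known lower $K$-theory of these finite groups into~\reqref{knzgamalg}, and killing the Nil terms via \repr{vcmcg}. The only cosmetic difference is sourcing: the paper simply cites~\cite[Section 5]{LO2} for the $K$-groups of $S_4$, $S_3$ and the order-$12$ dihedral group (and~\cite{Wa} for the vanishing of the Nil groups of $\Z/2\bigast\Z/2$), so the Carter/singular-character analysis you flag as the main obstacle is not actually needed.
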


\begin{proof}
Applying~\reqref{bmod} to the isomorphism $B_3(\rp)\cong \oonestar \bigast_{\dic{12}} \dic{24}$ of \reth{amalgamB3}, we obtain $\mcg{\rp}{3}\cong S_4\bigast_{D_3}D_6$, where $D_n$ denotes the dihedral group of order $2n$, and $S_4$ is the symmetric group on four elements. If $G$ is one of the groups $S_4,D_3$ or $D_6$, then with the exception of $K_{-1}(\Z[D_6])\cong \Z$, the groups $\wh(G),\widetilde{K}_0(\Z[G])$ and $K_i(\Z[G])$ vanish for all $i\leq -1$~\cite[Section 5]{LO2}.


The corresponding $\nil_i$ groups arise from the infinite virtually-cyclic groups given by Proposition~\ref{prop:vcmcg}. For the groups $\Z$ and  $\Z/2\times \Z$, we have $\nil_i=0$ for $i\leq 1$ by the proof of Theorem~\ref{th:mainA}. The group $\Z/2\bigast\Z/2$ has trivial Nil groups by~\cite{Wa}, and the Nil groups of the group $(\Z/2\times\Z/2)\bigast_{\Z/2}(\Z/2\times\Z/2)$ are isomorphic to those of $\Z/2\times\Z$, which are trivial for $i\leq 1$. Parts~(\ref{it:krp3a}) and~(\ref{it:krp3b}) then follow from Proposition~\ref{prop:formulaK}.
\end{proof}

\begin{rem}
Intuitively, \reqref{bmod} and the results of~\cite{GG12} suggest that the subgroup structures of $B_3(\rp)$ and $\mcg{\rp}{n}$ are closely related. However, comparing \reth{mainB} and \repr{krp3}, we observe that the lower algebraic of their group rings is quite different.
\end{rem}




We end this paper by relating the $\evc$-spaces of the groups $B_n(\rp)$ and $\mcg{\rp}{n}$ for $n\geq 3$. 

Let $X$ be the $\evc$-space for $\mcg{\rp}{n}$ constructed in Proposition~\ref{prop:evcspace}. Let $B_n(\rp)$ act on $X$ via~\reqref{bmod}. By~\cite[Proposition~26]{GG12}, there is a one-to-one correspondence between the virtually-cyclic subgroups of $B_n(\rp)$ and those of $\mcg{\rp}{n}$ given by $V\mapsto p(V)$ for each virtually-cyclic subgroup $V$ of $B_n(\rp)$,
and so $X$ is also an $\evc$-space for $B_n(\rp)$.

Let $\mathcal{H}$ be the family of subgroups of $\mcg{\rp}{n}$ given by~\reqref{unionH}, and let:
\begin{equation*}
\overline{\mathcal{H}}=\bigcup_{\mathcal{H}_i\in\mathcal{H}} p^{-1}(\mathcal{H}_i),
\end{equation*}
where $p$ is as in~\reqref{bmod}. Hence a formula similar to that of  Theorem~\ref{th:mainC} holds for $B_n(\rp)$ for $n\geq 3$. Specific computations for $K_{\ast}(\Z[G])$ for $G=B_n(\rp)$ or $G=\mcg{\rp}{n}$ for $n>3$ may in principle be obtained from that theorem, the main ingredients being equivariant homology groups of Teichm\"uller spaces and suitable Nil groups. 
However, if $n\geq 4$, the structure of the finite subgroups $B_n(\rp)$ (or of $\mcg{\rp}{n}$) up to conjugacy is considerably more intricate than that for $B_n(\St)$ or $B_3(\rp)$, and this is likely to make the calculations rather more involved.

\end{document}